\documentclass[11pt]{amsart}
\usepackage[english]{babel}

\usepackage{amssymb}
\usepackage{amsmath,amsthm}
\usepackage{mathtools}
\usepackage{dsfont}
\usepackage{nicefrac}
\usepackage{cases}
\usepackage{bm}
\usepackage{enumitem}
\usepackage{bbm}

\usepackage[T1]{fontenc}
\usepackage{lmodern}
\usepackage{microtype}

\usepackage{empheq}

\usepackage[inner=3cm,outer=3cm,top=3.3cm,bottom=3cm]{geometry}

\usepackage{xspace}

\usepackage[autostyle]{csquotes}
\usepackage[dvipsnames]{xcolor}

\usepackage{tikz}
\usepackage{tikz-cd}
\usetikzlibrary{matrix,arrows,patterns,decorations.pathreplacing,svg.path}
\usepackage{pgfplots}
\usepackage{color}

\usepackage{todonotes}

\usepackage[colorlinks, bookmarksdepth=3, citecolor=blue,linkcolor=blue]{hyperref}

\usepackage{pdfcomment}

\makeatletter
\@ifpackageloaded{hyperref}{%
\DeclareRobustCommand{\SkipTocEntry}[5]{}}{%
\DeclareRobustCommand{\SkipTocEntry}[4]{}}
\makeatother

\newenvironment{pf}{\proof[\proofname]}{\endproof}
\theoremstyle{plain}
\newtheorem{Th}{Theorem}[section]
\newtheorem{Prop}[Th]{Proposition}

\newtheorem{Lem}[Th]{Lemma}

\numberwithin{equation}{section}
\numberwithin{figure}{section}
\theoremstyle{definition}
\newtheorem{Def}[Th]{Definition}
\newtheorem{Ex}[Th]{Example}
\newtheorem{Rem}[Th]{Remark}

\DeclareMathOperator{\conv}{conv}

\newcommand{\cal}[1]{\mathcal{#1}}
\newcommand{\C}{\mathbb C}
\newcommand{\Q}{\mathbb Q}
\newcommand{\Z}{\mathbb Z}
\newcommand{\R}{\mathbb R}

\newcommand{\N}{\mathbb N}

\newcommand{\G}{\Gamma}

\newcommand{\D}{\Delta}

\newcommand{\bK}{\mathbf{K}}

\newcommand{\cP}{\mathcal P}

\newcommand{\cV}{\cal V}

\newcommand{\la}{\langle}
\newcommand{\ra}{\rangle}

\DeclareMathOperator{\V}{V}

\DeclareMathOperator{\vv}{v}
\DeclareMathOperator{\w}{w}

\DeclareMathOperator{\Vol}{Vol}
\DeclareMathOperator{\vol}{vol}

\DeclareMathOperator{\GL}{GL}
\DeclareMathOperator{\AGL}{AGL}

\DeclareMathOperator{\Sym}{\mathbf S}

\newcommand{\im}{\operatorname{Im}}

\newcommand{\SFV}{\cV^{\,\rm{sf}}}

\newcommand{\multiset}[2]{
\left.\mathchoice
  {\left(\kern-0.48em\binom{#1}{#2}\kern-0.48em\right)}
  {\big(\kern-0.30em\binom{\smash{#1}}{\smash{#2}}\kern-0.30em\big)}
  {\left(\kern-0.30em\binom{\smash{#1}}{\smash{#2}}\kern-0.30em\right)}
  {\left(\kern-0.30em\binom{\smash{#1}}{\smash{#2}}\kern-0.30em\right)}
\right.}

\makeatletter
\renewcommand{\boxed}[2][\fboxsep]{{%
  \setlength{\fboxsep}{#1}\fbox{\m@th$\displaystyle#2$}}}
\makeatother

\newcommand{\rs}[1]{Section~\ref{S:#1}}
\newcommand{\rl}[1]{Lemma~\ref{L:#1}}
\newcommand{\rp}[1]{Proposition~\ref{P:#1}}

\newcommand{\re}[1]{(\ref{e:#1})}

\newcommand{\rt}[1] {Theorem~\ref{T:#1}}

\newcommand{\rf}[1]{Figure~\ref{F:#1}}

\newcommand{\scr}[1]{{\large\text{\fontfamily{pzc}\selectfont #1}}}
\newcommand{\Pl}{\scr{Pl}\,}

\makeatletter
\@namedef{subjclassname@2020}{%
  \textup{2020} Mathematics Subject Classification}
\makeatother

\title[Discrete Heine-Shephard problem for four lattice polygons]{
On the discrete Heine-Shephard problem\\ for four lattice polygons}
\author{Darren Gerrity}
\address{Department of Mathematics and Statistics, Cleveland State University,  2121 Euclid Ave, Cleveland, Ohio, 44115 USA}
\email{d.gerrity@vikes.csuohio.edu}
\author{Ivan~Soprunov}
\address{Department of Mathematics and Statistics, Cleveland State University,  2121 Euclid Ave, Cleveland, Ohio, 44115 USA}
\email{i.soprunov@csuohio.edu}

\begin{document}
\selectlanguage{english}

\date{}

\keywords{volume polynomial, geometric inequalities, mixed volume, lattice polytopes, lattice width, Plücker-type inequality, intersection numbers}
\subjclass[2020]{Primary 52B20, 52A40; Secondary 52A39, 14M25}

\maketitle

\begin{abstract}
We study the set of square-free parts of volume polynomials associated with four planar lattice polytopes. This is motivated by the problem of describing possible pairwise intersection numbers of four curves in $(\C^*)^2$ with prescribed Newton polytopes and generic coefficients. It is known that for arbitrary convex bodies in $\R^2$, the corresponding square-free polynomials are characterized by the Plücker-type inequalities. We show that this characterization fails in the lattice setting: the interior of the space defined by the Plücker-type inequalities contains integer polynomials that are and are not realizable by lattice polytopes. This phenomenon arises from additional arithmetic constraints on the mixed areas of lattice polytopes. These constraints become apparent when we study a ``discrete diagram'', which maps a pair of planar lattice polytopes to their mixed area together with their lattice widths in a given direction.
\end{abstract}

\section{Introduction}

In \cite{Min03} Minkowski introduced the notion of mixed volume $\vv(K_1,\dots, K_d)$ of any collection of $d$ convex bodies $K_1,\dots, K_d$ in $\R^d$. It is the unique symmetric and multiliniear (with respect to Minkowski addition) function which coincides with the volume when all the bodies are the same, $K_1=\dots=K_d$. Many geometric invariants of a convex body, such as surface area, mean width, or volume of a projection can be expressed using mixed volumes. In 1970's, thanks to the results of Bernstein, Kushnirenko, and Khovanskii \cite{Bernstein75,Ku76,Kh78}, mixed volumes made their appearance in algebraic geometry as intersection numbers of hypersurfaces defined by generic polynomials with prescribed configuration of the exponent vectors (Newton polytopes). To make things more precise, consider a bivariate Laurent polynomial 
$$f(x,y)=\sum_{(a,b)\in A}\lambda_{(a,b)}x^{a}y^{b},$$
where $A$ is a finite subset of $\Z^2$ and $\lambda_{(a,b)}$ are non-zero complex numbers. The set $A$ is called the support of $f$ and its convex hull is called the Newton polytope $P_f$ of $f$. According to the BKK Theorem \cite[Sec 7.5]{CLO05}, given two Laurent polynomials $f,g$ with Newton polytopes $P_f$ and $P_g$ and generic coefficients, the number of common zeros of $f$ and $g$ in $(\C^*)^2$ equals twice the {\it mixed area} $\vv(P_f,P_g)$. The latter can be computed as
\begin{equation}\label{e:mv}
 \vv(P_f,P_g)=\frac{1}{2}\left(\vol_2(P_f+P_g)-\vol_2(P_f)-\vol_2(P_g)\right), 
\end{equation}
where $P_f+P_g$ is the Minkowski sum of the polytopes and $\vol_2$ denotes the area, i.e. the 2-dimensional Euclidean volume.
In other words, $2\vv(P_f,P_g)$ is the (global) intersection number $(\G_f\cdot\G_g)$ of two curves $\G_f$ and $\G_g$ in $(\C^*)^2$ defined by $f$ and $g$. 

This paper is motivated by the following question. Let $\G_1,\dots, \G_n$ be $n$ curves in $(\C^*)^2$
defined by generic Laurent polynomials $f_1,\dots, f_n$ with Newton polytopes $P_1,\dots, P_n$.
They define ${n\choose 2}$ pairwise intersection numbers $(\G_i\cdot\G_j)$ for $1\leq i<j\leq n$. What are possible relations between these numbers? Equivalently, what are possible relations between
${n\choose 2}$ mixed areas $\vv(P_i,P_j)$ defined by arbitrary collections $P_1,\dots,P_n$
of $n$ lattice polytopes  in $\R^2$?

In \cite{AS22} it was shown that starting with $n=4$ there are algebraic relations between the ${n\choose 2}$ mixed areas of $n$ lattice polytopes  (in fact, arbitrary convex bodies) in $\R^2$. In particular, for $n=4$, the six mixed areas $v_{ij}=\vv(P_i,P_j)$ for $1\leq i<j\leq 4$ satisfy three quadratic inequalities, called the Plücker-type inequalities:
\begin{equation*}
  v_{12}v_{34}+v_{13}v_{24}\geq v_{14}v_{23},\quad v_{13}v_{24}+v_{14}v_{23}\geq v_{12}v_{34}, \quad v_{14}v_{23}+v_{12}v_{34}\geq v_{13}v_{24}.  
\end{equation*}
Moreover, the Plücker-type inequalities, together with the non-negativity of the $v_{ij}$, completely describe the set of all possible 6-tuples of mixed areas of 4 convex bodies in $\R^2$, \cite[Th 3.2]{AS22}.
However, if we restrict ourselves to the case of lattice polytopes, will there be additional relations besides the Plücker-type inequalities and the necessary condition $2v_{ij}\in\Z_{\geq 0}$? In this paper we show that, although there are no additional algebraic relations between the $v_{ij}$, there are some arithmetic conditions the six integers $2v_{ij}$ must satisfy. %, see \rt{} and \rt{}.

The question of describing all possible relations between mixed volumes of a collection of convex bodies goes back to the results of Heine \cite{heine1938wertvorrat} and Shephard \cite{Shephard1960}. Let $K_1,\dots, K_n$ be convex bodies in $\R^d$
and consider their Minkowski linear combination $x_1K_1+\dots+x_nK_n$ for some non-negative scaling factors $x_1,\dots, x_n$. Then, as shown by Minkowski \cite{Min03}, the volume of $x_1K_1+\dots+x_nK_n$ is a homogeneous degree $d$ polynomial in $x_1,\dots,x_n$ with mixed volumes as coefficients:
\begin{equation}\label{e:volume-poly}
  \vol_d(x_1K_1+\dots+x_nK_n)=\sum_{1\leq i_1,\dots,i_d\leq n}\!\vv(K_{i_1},\dots, K_{i_d})x_{i_1}\cdots x_{i_d}.  
\end{equation}
(Note that we allow repetitions of the indices in the above sum.)
For example, for $n=d=2$, using the symmetry of the mixed volume and the fact that $\vv(K,K)=\vol_2(K)$, we obtain the following quadratic form
\begin{equation}\label{e:volume-poly-dim2}
\vol_2(x_1K_1+x_2K_2)=\vol_2(K_1)x_1^2+2\vv(K_1,K_2)x_1x_2+\vol_2(K_2)x_2^2.
\end{equation}
The distinctive feature of this form is that it has non-negative coefficients and is {\it indefinite}. The latter is provided by the Minkowski inequality \cite[Th 7.2.1]{Schneider2014}
\begin{equation}\label{e:Mink-ineq}
\vol_2(K_1)\vol_2(K_2)\leq \vv(K_1,K_2)^2.
\end{equation}

The Heine-Shephard problem asks for a description of the space of all volume polynomials \re{volume-poly}, which we denote by $\cV_{n,d}$, in terms of polynomial inequalities on their coefficients. For example, the space $\cV_{2,2}$ of volume polynomials  of pairs of planar bodies \re{volume-poly-dim2} is precisely the space of indefinite quadratic forms with non-negative coefficients. Apart from the case of $n=d=2$ above, the only known descriptions of $\cV_{n,d}$ are by Heine for $n=3$ and $d=2$ and by Shephard for $n=2$ and arbitrary $d\geq 3$, see \cite{heine1938wertvorrat, Shephard1960}. 

The question we consider in this paper is related to the {\it discrete} version of the Heine-Shephard problem, in which the class of all convex bodies is replaced with the class of lattice polytopes. The smallest nontrivial case of the discrete Heine-Shephard problem for $n=d=2$ was recently solved in \cite{MixedLattice24}. Namely, it was shown that the space
$\cV_{2,2}(\Z)$ of volume polynomials of pairs of planar lattice polytopes equals the space of indefinite quadratic forms with non-negative integer coefficients. In this paper, we consider $n=3,4$ and $d=2$, but we focus on the {\it square-free} part of the volume polynomial $\vol_2(x_1K_1+\dots+x_nK_n)$, i.e.
$$v(x_1,\dots, x_n)=\sum_{1\leq i<j\leq n}\! 2\vv(K_i,K_j)x_ix_j.$$ 
Let $\SFV_{n,2}$ denote the space of all such polynomials and $\SFV_{n,2}(\Z)$ the subset of those corresponding to lattice polytopes. For $n=3$ the situation is simple: $\SFV_{3,2}(\Z)$ consists of all trivariate square-free quadratic forms with non-negative integer coefficients, see \rp{sf-3}. For $n=4$, as we pointed out above, $\SFV_{4,2}(\Z)$ is a subset of the {\it discrete Plücker set} $\Pl_{4,2}(\Z)$ of  square-free quadratic forms whose coefficients are non-negative integers and satisfy Plücker-type inequalities
$$\Pl_{4,2}(\Z)=\Big\{\sum_{1\leq i<j\leq 4}\!v_{ij}x_ix_j\, :\, v_{ij}\in \Z_{\geq 0},\,  v_{ij}v_{kl}+v_{ik}v_{jl}\geq v_{il}v_{jk},\text{ for }\{i,j\}\sqcup\{k,l\}=[4]\Big\}.$$
In \rs{boundary} we show that every boundary point of $\Pl_{4,2}(\Z)$ (i.e. where at least one of the Plücker-type inequalities becomes equality) belongs to $\SFV_{4,2}(\Z)$. In Sections \ref{S:interior-in} and \ref{S:interior-out} we describe families of polynomials in the interior of $\Pl_{4,2}(\Z)$ that lie inside and outside of $\SFV_{4,2}(\Z)$. Our analysis is based on understanding a ``discrete diagram'' which is the image of the map that sends a pair of planar lattice polytopes to the triple consisting of their lattice widths (in some direction) and their mixed area. We do this in \rs{diagram}.

Another version of the Heine-Shephard problem, also motivated by a question in intersection theory, was recently considered in \cite{ProjAreas}. There, the authors study the problem of describing all relations among the areas of the projections of a convex body $K \subset \mathbb{R}^4$ onto the six two-dimensional coordinate subspaces. These projections can be expressed as mixed volumes $\vv(K,K,I_i,I_j)$ for $1 \le i < j \le 4$, where $I_1,\dots,I_4$ are the coordinate unit segments. Thus, the problem in \cite{ProjAreas} amounts to describing part of the volume polynomial of $(K,I_1,\dots,I_4)$, with the segments fixed and $K$ varying. As in the present paper, the authors show a discrepancy between the continuous and discrete cases.

\subsection*{Acknowledgments} The first author was supported by an Undergraduate Summer Research Award (USRA) funded by the Office of Research at Cleveland State University and held during Summer 2025. The second author is supported by an AMS–Simons Travel Grant. The authors are grateful to Gennadiy Averkov for his interest in this work.

\section{Preliminaries}
Let $\R^d$ be a real $d$-dimensional vector space and $\Z^d\subset\R^d$ the integer lattice. 
%By $\Z_{\geq n}$ we denote the set of all integers greater than or equal to $n$.  
We use $\{e_1,\dots,e_n\}$ to denote the standard basis for $\R^d$ and $\la u,v\ra$ the standard Euclidean inner product. 
For any $A,B\subset\R^d$, their {\it Minkowski sum} $A+B$ is the pointwise sum $A+B=\{a+b\in\R^d : a\in A, b\in B\}$.

Recall that a {\it lattice polytope} in $\R^d$ is the convex hull of a finite subset of $\Z^d$. We let $\cP(\Z^d)$ denote the set of all lattice polytopes in $\R^d$. The set $\cP(\Z^d)$ is invariant under the group $\AGL_d(\Z)$ of {\it affine unimodular transformations} consisting of maps $F:\R^d\to\R^d$ of the form $F(x)=Ax+b$ for some unimodular
matrix $A\in\GL_d(\Z)$ and a lattice vector $b\in\Z^d$. 

As common in lattice geometry, we normalize the Euclidean volume of lattice polytopes by a factor of $d!$ and denote it by $\Vol_d(P)$.
This way for every $P\in\cP(\Z^d)$, the normalized volume $\Vol_d(P)$ is a non-negative integer. For example, for the standard $d$-simplex $\D_d=\conv\{0,e_1,\dots,e_d\}$ we have $\Vol_d(\D_d)=1$.  
Consequently, the {\it normalized mixed volume}, defined by 
\begin{equation}\label{e:polarization}
\V(P_1,\dots,P_d)=\frac{1}{d!}\sum_{\ell=1}^d(-1)^{d+\ell}\!\sum_{i_1<\dots<i_\ell}\!\Vol_d(P_{i_1}+\dots+P_{i_\ell}),
\end{equation}
is an integer for any $P_1,\dots, P_d\in\cal P(\Z^d)$.

We list some fundamental properties of the mixed volumes that we will use throughout the paper: $\V(P_1,\dots,P_d)$ is (a) non-negative; (b) monotone in each entry with respect to inclusion; (c) invariant under independent lattice translations of the $P_i$; (d) invariant under simultaneous affine unimodular transformations of the $P_i$; and (e) Minkowski-linear in each entry, i.e.
$$\V(P_1,\dots,\lambda P_i+\mu P_i',\dots, P_d)=\lambda\V(P_1,\dots,P_i,\dots, P_d)+\mu\V(P_1,\dots,P_i',\dots, P_d)$$
for non-negative $\lambda, \mu\in\R$.

In the 2-dimensional case, which is the focus of this paper, \re{polarization} simplifies to
\begin{equation}\label{e:polarization-2}
\V(P,Q)=\frac{1}{2}\left(\Vol_2(P+Q)-\Vol_2(P)-\Vol_2(Q)\right).
\end{equation}
Note that $\V(P_1,P_2)=2\vv(P_1,P_2)$ relates the normalized mixed area and the (usual) mixed area introduced in \re{mv}. An immediate consequence of \re{polarization-2} is the formula for the mixed area of segments. Given a pair of lattice segments $I=\conv\{(0,0),(a,b)\}$ and $J=\conv\{(0,0),(c,d)\}$ their normalized mixed area equals the absolute value of the corresponding determinant $\V(I,J)=|ad-bc|$. In particular, $\V(I,J)=0$ when $I$ and $J$ are parallel. It is not hard to see from the monotonicity property that $\V(P,Q)=0$ if and only if either one of $P,Q$ is a point or $P$ and $Q$ are parallel segments.

We will need another formula for the normalized mixed area in terms of the support function of $P$ and the boundary of $Q$, see \cite[Sec. 2.5]{mvClass2019}. Recall that a vector $u\in\Z^2$ is called {\it primitive} if its coordinates are relatively prime. Furthermore, for a lattice segment $I\subset\Z^2$ define its {\it lattice length}
$\Vol_1(I)$ to be $|I\cap\Z^2|-1$. In particular, 
if $I=\conv\{(0,0),(a,b)\}$ then $\Vol_1(I)=\gcd(a,b)$.
Lattice segments of lattice length one are called primitive. Finally, let $h_P(u)=\max\{\la x,u \ra : x\in P\}$ be the support function of $P$. We have 
\begin{equation}\label{e:support-function}
\V(P,Q)=\sum_{u\in U_Q}h_P(u)\Vol_1(Q^u),
\end{equation}
where $U_Q$ is the set of primitive outer normals to the sides of $Q$ and $Q^u$ is the side of $Q$ with the outer normal $u$. In practice, it is more convenient to express \re{support-function} using the {\it normalized outer normals} $\Vol_1(Q^u)u$:
\begin{equation}\label{e:support-function-normalized}
\V(P,Q)=\sum_{u\in U_Q}h_P(\Vol_1(Q^u)u).
\end{equation}

\begin{Ex}
Let $P$ be the unit square and $Q=\conv\left\{(0,0),(0,2),(3,3)\right\}$ a triangle as in \rf{example}. Then $Q$ has three normalized outer normals 
$$u_1=\left(\begin{matrix}-2\\ \ \ 0\end{matrix}\right),\ \ u_2=\left(\begin{matrix}-1\\ \ \ 3\end{matrix}\right), \ \ u_3=\left(\begin{matrix}\ \ 3\\ -3\end{matrix}\right).$$
\begin{figure}[h]
\begin{center}
\includegraphics[scale=.45]{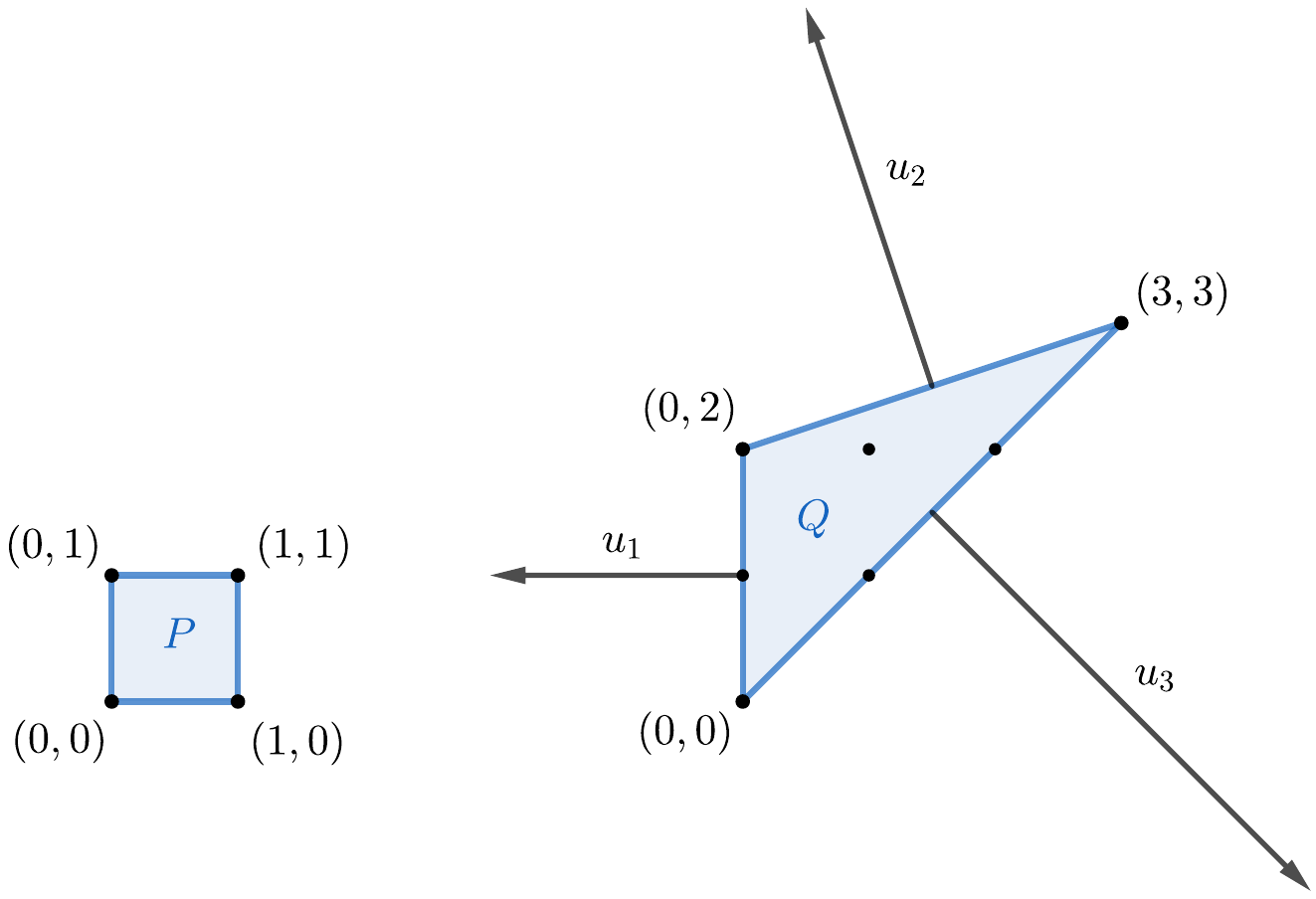} 
\caption{The vertices of $P$ and the normalized outer normals of $Q$}
\label{F:example} 
\end{center}
\end{figure}
We have 
$$\V(P,Q)= h_P(u_1)+h_P(u_2)+h_P(u_3)=0+3+3=6.$$ 
\end{Ex}

Let $P\in\cP(\Z^2)$ and $u\in\Z^2$ a primitive vector. Define the {\it lattice width} of $P$ in the direction of $u$ by
$$\w_u(P)=\max\{\la x,u\ra : x\in P\}-\min\{\la x,u\ra : x\in P\}.$$
For example, when $u=e_2$ the lattice width $\w_{e_2}(P)$ is the height of $P$, that is, the difference between the largest and the smallest values of the second coordinate of points of $P$. 
%In the following proposition we look at two special instances of \re{support-function} when $Q$ is a lattice segment and the standard 2-simplex.

\begin{Prop}\label{P:two-instances} Let $P\in\cP(\Z^2)$. Then
\begin{enumerate}
\item for a lattice segment $I\in\cP(\Z^2)$ we have $\V(P,I)=\w_{u}(P)\Vol_1(I)$,
    where $u$ is a primitive vector orthogonal to $I$;
\item $\V(P,\D_2)$ equals the smallest non-negative integer $\delta$ such that a lattice translate of $P$ is contained in $\delta\Delta_2$.
\end{enumerate}
\end{Prop}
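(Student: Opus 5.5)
Both parts follow from the support-function formula \re{support-function-normalized}, applied with $Q$ being the segment $I$ or the simplex $\D_2$.

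\emph{Part (1).} Let $I=\conv\{0,w\}$ with $w=\Vol_1(I)\,v$, where $v$ is primitive. Viewed as a degenerate polygon, $I$ has two ``sides'' (both equal to $I$ itself) with primitive outer normals $u$ and $-u$, where $u$ is primitive and orthogonal to $v$. Each side has lattice length $\Vol_1(I)$. Then \re{support-function} gives
$$\V(P,I)=\Vol_1(I)\bigl(h_P(u)+h_P(-u)\bigr)=\Vol_1(I)\,\w_u(P),$$
since $h_P(-u)=-\min_{x\in P}\la x,u\ra$. To avoid relying on the degenerate case of \re{support-function}, I can check the identity directly from \re{polarization-2}. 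The sum $P+I$ is $P$ swept along $w$. Its area equals the area of $P$ plus the area of a parallelogram-like region with base $w$ and width given by the extent of $P$ orthogonal to $w$. Concretely, $\vol_2(P+I)-\vol_2(P)=\|w\|\cdot(\text{Euclidean width of }P\text{ in direction }u/\|u\|)$. Since $u\perp v$ and $u,v$ are both primitive, $\|u\|=\|v\|$, so this quantity equals $\Vol_1(I)\,\w_u(P)$ in Euclidean area. Multiplying by $2$ for normalization and halving as in \re{polarization-2}, and using $\Vol_2(I)=0$, gives the claim.

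\emph{Part (2).} The normalized outer normals of $\D_2$ are $-e_1$, $-e_2$ and $(1,1)$, all with lattice length one, so \re{support-function-normalized} gives
$$\V(P,\D_2)=-\min_P x-\min_P y+\max_P(x+y).$$
This expression is translation invariant, by property (c) or directly. Translate $P$ by a lattice vector so that $\min_P x=\min_P y=0$; this is possible because $P$ is a lattice polytope, so the minima are integers. Then $\V(P,\D_2)=\max_P(x+y)=:\delta$, and $P\subseteq\delta\D_2=\{x\ge0,\,y\ge0,\,x+y\le\delta\}$.

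For minimality, suppose some lattice translate $P+t\subseteq\delta'\D_2$. Then $\min_{P+t}x\ge0$ and $\min_{P+t}y\ge0$, while $\max_{P+t}(x+y)\le\delta'$. Applying the translation-invariant formula to $P+t$ gives $\V(P,\D_2)\le\delta'$. Hence $\delta$ is the smallest such integer, and it is non-negative by property (a).

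The only delicate point is the degenerate segment case in part (1), namely making sure \re{support-function} applies with both normals $\pm u$. The direct area computation above settles it.
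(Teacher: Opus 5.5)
Your proof is correct and follows the paper's own route: both parts are read off from the support-function formula, using the normals $\pm u$ for the segment $I$ in part (1) and the normals $-e_1,-e_2,e_1+e_2$ for $\Delta_2$ in part (2). Your direct area check of the degenerate segment case in (1), and your explicit minimality argument via translation invariance in (2), add details that the paper leaves implicit.
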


\begin{pf}
Part (1) follows right away from \re{support-function} and the observation $\w_u(P)=h_P(u)+h_P(-u)$. 
For part (2), the formula \re{support-function} provides
$$\V(P,Q)=h_P(e_1+e_2)+h_P(-e_1)+h_P(-e_2).$$
After a lattice translation we may assume $h_P(-e_1)=h_P(-e_2)=0$. Then $h_P(e_1+e_2)=\max_{x\in P}(x_1+x_2)$ is precisely the smallest factor $\delta$ such that the dilate $\delta\D_2$ contains $P$.
\end{pf}

It is not hard to see that lattice polytopes $P\in\cP(\Z^d)$ of normalized volume one are $\AGL_d(\Z)$-equivalent to the standard $d$-simplex $\D_d$. In this case, $P$ is called a {\it unimodular $d$-simplex}. In \cite{Esterov2015} Esterov and Gusev classified all collections $P_1,\dots, P_d\in\cP(\Z^d)$ whose normalized mixed volume equals one. In the following proposition we present the 2-dimensional case of their result. For the sake of completeness, we provide a short proof.

\begin{Prop}\label{P:EG-2d} 
Let $P,Q$ be lattice polytopes in $\R^2$ with $\V(P,Q)=1$. Then either one of them is a primitive segment 
or both are lattice translates of the same unimodular triangle.
\end{Prop}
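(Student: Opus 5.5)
Since $\V(P,Q)=1\neq 0$, neither $P$ nor $Q$ is a point, and they are not parallel segments. I will split into cases according to the dimensions of $P$ and $Q$.

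\emph{Case 1: one of them is a segment.} Suppose $Q=I$ is a lattice segment. By \rp{two-instances}(1), $1=\V(P,I)=\w_u(P)\Vol_1(I)$, where $u$ is a primitive vector orthogonal to $I$. Both factors are non-negative integers, so $\Vol_1(I)=1$. Hence $I$ is a primitive segment, and we are done. (In this case we also get $\w_u(P)=1$, but we do not need it.)

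\emph{Case 2: both $P$ and $Q$ are two-dimensional.} Here the plan is to use monotonicity to reduce to triangles. First, $Q$ contains a lattice triangle $T$, which in turn contains a unimodular triangle $T'$; indeed, any lattice triangle can be triangulated into unimodular ones. Applying an affine unimodular map, we may assume $T'=\D_2$. By monotonicity (b), $1=\V(P,Q)\ge \V(P,\D_2)$, and $\V(P,\D_2)\ge1$ because $P$ is not a point. So $\V(P,\D_2)=1$. By \rp{two-instances}(2), a lattice translate of $P$ then lies in $1\cdot\D_2=\D_2$. Since $P$ is two-dimensional, $P$ is a translate of $\D_2$. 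Thus $P$ is a unimodular triangle.

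By symmetry, $Q$ is also a unimodular triangle. It remains to show that $Q$ is a lattice translate of $P$. Normalize $P=\D_2$. We know that $Q$ is a unimodular triangle with $\V(\D_2,Q)=1$. By \rp{two-instances}(2) with the roles swapped, some translate of $Q$ lies in $\D_2$. A unimodular triangle has normalized area $1=\Vol_2(\D_2)$, so that translate of $Q$ equals $\D_2$. Therefore $Q$ is a lattice translate of $P$.

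The main obstacle is keeping the reduction steps honest: $\V(P,\D_2)\ge 1$ for non-point $P$, and the fact that a two-dimensional lattice polygon contains a unimodular triangle. Both are standard. The first follows from the support function formula: for a non-point $P$, the integer $h_P(e_1+e_2)+h_P(-e_1)+h_P(-e_2)$ is the lattice width sum, which is positive. The second follows because any lattice triangle can be subdivided into unimodular triangles, a consequence of Pick's theorem.
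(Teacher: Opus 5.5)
Your proof is correct, but in the two-dimensional case you establish unimodularity differently from the paper. The paper notes that if neither polytope is a segment, then $\Vol_2(P)$ and $\Vol_2(Q)$ are positive integers. Minkowski's inequality then gives $1\le\Vol_2(P)\Vol_2(Q)\le\V(P,Q)^2=1$, so both are unimodular triangles. It then normalizes $Q=\D_2$ and applies \rp{two-instances}(2), which is exactly your last step.

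You avoid any geometric inequality. You place a unimodular triangle $\D_2$ inside $Q$, and then monotonicity, integrality and \rp{two-instances}(2) force $\V(P,\D_2)=1$, hence $P$ is a translate of $\D_2$. The segment case is the same in both proofs: your use of \rp{two-instances}(1) is the paper's linearity argument made explicit.

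What your route buys is elementarity: it needs only monotonicity, integrality and Pick's theorem. The paper's route is a one-liner, and its inequality step carries over to $d$ full-dimensional lattice polytopes via $\Vol_d(P_1)\cdots\Vol_d(P_d)\le\V(P_1,\dots,P_d)^d$. Your key input, that every full-dimensional lattice polytope contains a unimodular simplex, is special to the plane; Reeve tetrahedra already fail it in dimension $3$.

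Two small remarks.
\begin{enumerate}
\item The quantity $h_P(e_1+e_2)+h_P(-e_1)+h_P(-e_2)$ is not literally a sum of lattice widths. It equals $\max_P(x_1+x_2)-\min_P x_1-\min_P x_2$, which is at least $\max\{\w_{e_1}(P),\w_{e_2}(P)\}$ and hence positive for a non-point $P$. Alternatively, you can cite the fact that $\V(P,Q)=0$ only for points or parallel segments.
\item The ``by symmetry'' step is unnecessary. After your normalization $\D_2\subseteq Q$, with $P$ a translate of $\D_2$, applying \rp{two-instances}(2) to $\V(\D_2,Q)=1$ already puts a lattice translate of $Q$ inside $\D_2$. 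Two-dimensionality of $Q$ then forces that translate to equal $\D_2$.
\end{enumerate}
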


\begin{pf} Suppose neither $P$ nor $Q$ is a lattice segment. Then $\Vol_2(P)$ and $\Vol_2(Q)$ are positive integers.
By the Minkowski inequality \re{Mink-ineq},
$$1\leq \Vol_2(P)\Vol_2(Q)\leq \V(P,Q)^2=1.$$
This shows that $\Vol_2(P)=\Vol_2(Q)=1$ and, hence, $P$ and $Q$ are unimodular triangles. It remains to show that $P=Q$. After a unimodular transformation, we may assume that $Q=\Delta_2$. Then, by part (2) of \rp{two-instances}, $\V(P,\Delta_2)=1$ implies
that $P\subseteq\Delta_2$ (after a lattice translation) and, hence, $P=\Delta_2$, as $P$ is 2-dimensional.

Now assume that $P$ is a lattice segment. Then, by the linearity of the mixed area, $P$ has to be of lattice length one, i.e. a primitive segment.
\end{pf}

\section{The case of three planar lattice polytopes}\label{S:n=3}

%As follows from the next proposition, every homogeneous square-free quadratic polynomial with non-negative integer coefficients is the square-free part of a volume polynomial of three lattice polygons.
In this short section we show that, in general, there are no relations between the mixed areas of $\V(P_1,P_2)$, $\V(P_1,P_3)$, and $\V(P_2,P_3)$ defined by
three lattice polytopes $P_1,P_2,P_3\in\cP(\Z^2)$.

\begin{Prop}\label{P:sf-3}
Let $(v_{12},v_{13},v_{23})\in\Z_{\geq 0}^3$. Then there exist lattice polytopes $P_1,P_2,P_3$ in $\R^2$ such that
$v_{ij}=\V(P_i,P_j)$ for $1\leq i<j\leq 3$. Equivalently, 
$$\SFV_{3,2}(\Z)=\{v_{12}x_1x_2+v_{13}x_1x_3+v_{23}x_2x_3 : v_{ij}\in\Z_{\geq 0}\}.$$
\end{Prop}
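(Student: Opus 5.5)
Given $(v_{12},v_{13},v_{23})\in\Z_{\geq 0}^3$, I would realize the three numbers explicitly using lattice segments. By part (1) of \rp{two-instances}, or directly by the determinant formula, two segments $\conv\{0,(a,b)\}$ and $\conv\{0,(c,d)\}$ have normalized mixed area $|ad-bc|$. So it suffices to find three integer vectors $w_1,w_2,w_3\in\Z^2$ with
\begin{equation*}
|\det(w_1,w_2)|=v_{12},\quad |\det(w_1,w_3)|=v_{13},\quad |\det(w_2,w_3)|=v_{23},
\end{equation*}
and then put $P_i=\conv\{0,w_i\}$. A zero vector gives a point, which is also allowed.

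\emph{Generic case.} I would take
\begin{equation*}
w_1=(1,0),\qquad w_2=(0,v_{12}),\qquad w_3=(x,v_{13}).
\end{equation*}
Then $\det(w_1,w_2)=v_{12}$ and $\det(w_1,w_3)=v_{13}$. Also $\det(w_2,w_3)=-v_{12}x$, so we need $v_{12}|x|=v_{23}$. This works only when $v_{12}$ divides $v_{23}$, so this naive ansatz needs to be modified.

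\emph{Fix via scaling.} I would instead use
\begin{equation*}
w_1=(a,0),\qquad w_2=(0,b),\qquad w_3=(c,d).
\end{equation*}
The three determinants are then $ab$, $ad$ and $-bc$. If all three $v_{ij}$ are positive, set $a=v_{12}v_{13}$, $b=v_{23}$, $d=v_{12}$... but then $ab=v_{12}v_{13}v_{23}$, which is wrong in general. Pure segments cannot realize arbitrary triples. Take $v=(2,2,2)$: we need $\det(w_1,w_2)=\pm2$, $\det(w_1,w_3)=\pm2$ and $\det(w_2,w_3)=\pm2$. Choose $w_1=(1,0)$, $w_2=(0,2)$, $w_3=(x,\pm2)$. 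Then $2|x|=2$ gives $x=\pm1$, so this example works. The general obstruction is that $\det(w_1,w_2)\det(w_1,w_3)$ constrains $\det(w_2,w_3)$ modulo $\det(w_1,\cdot)$. So I would expect to combine segments with Minkowski sums.

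\emph{Main plan: linearity.} I would instead use Minkowski linearity with one fixed auxiliary polygon. Let $I_1=\conv\{0,e_1\}$, $I_2=\conv\{0,e_2\}$ and $I_3=\conv\{0,e_1+e_2\}$. These pairwise have $\V(I_i,I_j)=1$, and $\V(I_i,I_i)=0$. Set
\begin{equation*}
P_1=\alpha_2 I_2+\alpha_3 I_3,\qquad P_2=\beta_1 I_1+\beta_3 I_3,\qquad P_3=\gamma_1 I_1+\gamma_2 I_2
\end{equation*}
with non-negative integers $\alpha_i,\beta_i,\gamma_i$. Then
\begin{equation*}
\V(P_1,P_2)=\alpha_2\beta_1+\alpha_2\beta_3+\alpha_3\beta_1,
\end{equation*}
and the other two mixed areas are analogous. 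This is still awkward, so the simplest choice is to realize each $v_{ij}$ by its own direction. Put $P_1=v_{12}I_2+I_1'$, where all pieces are chosen so that cross terms vanish.

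The cleanest version uses $P_3$ as a single unit segment $I=\conv\{0,e_1\}$. Then $\V(P_i,I)$ is the height $\w_{e_2}(P_i)$. Take
\begin{equation*}
P_1=\conv\{0,(v_{23}',v_{13})\},\qquad P_2=\conv\{0,(t,v_{23})\}.
\end{equation*}
Then $\V(P_1,P_2)=|v_{23}'v_{23}-tv_{13}|$, and we need this to equal $v_{12}$. Replacing $P_1$ by the segment $\conv\{0,e_2\}$ scaled to $v_{13}$ plus a horizontal segment $sI$ gives height $v_{13}$. Taking $P_2=v_{23}\conv\{0,e_2\}$ gives height $v_{23}$. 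Then $\V(P_1,P_2)=s\,v_{23}$, which is again a divisibility constraint.

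The remedy I would pursue is to use $P_2=\conv\{0,(1,0),(x,v_{23})\}$-type triangles, whose mixed area with a segment of direction $e_2$ equals the width in $e_1$, which can be any value $\geq 1$. The obstacle, and the key step, is handling zeros in the triple, where $P_i$ must degenerate to points or parallel segments. I would treat those degenerate cases separately, with a small case analysis.
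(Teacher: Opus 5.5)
Your proposal has a genuine gap. It never reaches a construction that works for every triple, and its central negative claim, that pure segments cannot realize arbitrary triples, is false. The obstruction you found comes from your ans\"atze, not from the problem. In each of your segment attempts $w_2$ is forced to be vertical, and in the first one $w_1$ is also primitive; this discards exactly the freedom that is needed.

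The missing idea is to put $d=\gcd(v_{12},v_{13})$ into the lattice length of the first segment and keep the horizontal coordinates of the other two free. For $v_{12},v_{13}$ not both zero, take
\[
P_1=\conv\{0,(d,0)\},\quad P_2=\conv\{0,(a,v_{12}/d)\},\quad P_3=\conv\{0,(b,v_{13}/d)\}.
\]
Then $\V(P_1,P_2)=v_{12}$, $\V(P_1,P_3)=v_{13}$ and $\V(P_2,P_3)=|a\,v_{13}/d-b\,v_{12}/d|$. Since $v_{12}/d$ and $v_{13}/d$ are coprime, B\'ezout gives $a,b\in\Z$ making the last quantity equal to $v_{23}$. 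If $v_{12}=v_{13}=0$, a point $P_1$ together with $P_2=\conv\{0,e_1\}$ and $P_3=\conv\{0,v_{23}e_2\}$ works. This is the paper's proof.

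The non-primitive length $d$ is genuinely needed for segments. For $(6,10,15)$, no all-segment realization containing a primitive segment exists, since none of the pairwise gcds $2,3,5$ divides the remaining entry.

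The route you finally settle on is also a dead end in general. Fixing $P_3=\conv\{0,e_1\}$ turns $\V(P_1,P_3)$ and $\V(P_2,P_3)$ into the heights of $P_1$ and $P_2$. You are then asking for $(v_{13},v_{12},v_{23})$ to lie in the discrete diagram, which has gaps. For $(v_{12},v_{13},v_{23})=(1,30,50)$ you would need polygons of heights $30$ and $50$ with mixed area $1<\min\{10,3,5\}$. \rp{gap} forbids this whether you use triangles or any other polygons. Yet the triple is realizable, by the construction above with $d=1$, $a=2$, $b=10$.

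Your triangle ``remedy'' also does not remove the divisibility issue as stated. You never specify $P_1$, and if $P_1$ is the vertical segment of height $v_{13}$, then $\V(P_1,P_2)=v_{13}\,\w_{e_1}(P_2)$ is again a multiple of $v_{13}$.
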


\begin{pf} Assume $v_{12}$ and $v_{13}$ are not both zero and 
let $d=\gcd(v_{12},v_{13})$. Since $v_{12}/d$ and $v_{13}/d$ are relatively prime,
there exist $a,b\in\Z$ such that $av_{13}/d -bv_{12}/d=v_{23}$. Now the lattice segments
$P_1=\conv\{(0,0),(d,0)\}$, $P_2=\conv\{(0,0),(a,v_{12}/d)\}$, and $P_3=\conv\{(0,0),(b,v_{13}/d)\}$ satisfy 
the above requirement.

If $v_{12}=v_{13}=0$ we put $P_1=\{(0,0)\}$, $P_2=\conv\{(0,0),(1,0)\}$, and $P_3=\conv\{(0,0),(0,v_{23})\}$.

\end{pf}

\section{The discrete diagram $\left(\w_u(P),\V(P,Q), \w_u(Q)\right)$}\label{S:diagram}

Let $P\in\cP(\Z^2)$ be a lattice polytope and $u\in\Z^2$ a primitive vector. Recall that the lattice width of $P$ in the direction of $u$ is defined as $\w_u(P)=\max_{x\in P}\la x,u\ra-\min_{x\in P}\la x,u\ra$. We are interested in describing the image $\Phi:=\im\varphi$ of the map
$$\varphi:\cP(\Z^2)\times\cP(\Z^2)\to\Z^3,\quad (P,Q)\mapsto\left(\w_u(P),\V(P,Q), \w_u(Q)\right),$$
which we call a {\it discrete diagram}. The terminology comes from the work of Blaschke \cite{Blaschke1916} and Santal\'o \cite{Santalo61} who proposed the study of diagrams, i.e. images of maps that send a convex body (or a tuple of convex bodies) to a tuple of its geometric invariants, such as volume, surface area, and mean curvature. 

Note that the discrete diagram $\Phi$ does not depend on the choice of the primitive vector $u$. Indeed, for any primitive vectors $u,u'\in\Z^2$ there exists a unimodular matrix $A\in\GL_2(\Z)$ such that $u'=Au$. Consider the lattice polytopes $P'=PA^{-1}$ and $Q'=QA^{-1}$, where by definition 
$PA^{-1}=\{xA^{-1} : x\in P\}$. Then, by the invariance of the mixed area,
$$
\left(\w_u(P'),\V(P',Q'),\w_u(Q')\right)=
\left(\w_{u'}(P),\V(P,Q),\w_{u'}(Q)\right).
$$

\begin{Rem} It is easy to see that in the case of general convex bodies $K,L$ in $\R^2$ the ``continuous'' diagram 
$\left(\w_u(K),\V(K,L),\w_u(L)\right)$ coincides with $\R_{\geq 0}^3\setminus\left(0\times\R_{>0}\times 0\right)$. First, note that if $\w_u(K)=\w_u(L)=0$ then $K$ and $L$ must be line segments (possibly degenerated to a point) orthogonal to $u$ which implies $\V(K,L)=0$.
Now, every triple $(w_1,v,w_2)\in\R_{\geq 0}^3$ with $w_1+w_2>0$ can be realized as $\left(\w_u(K),\V(K,L),\w_u(L)\right)$ for some segments 
$K,L$. Indeed, without loss of generality, we may assume $u=e_2$ and $w_2>0$. Then the segments
$K=\conv\{(0,0),(v/w_2,w_1)\}$ and $L=\conv\{(0,0),(0,w_2)\}$
satisfy the required.
\end{Rem}

Now consider $P,Q\in\cP(\Z^2)$. Clearly, there are no restrictions on 
the lattice widths $\w_u(P)$ and $\w_u(Q)$ besides being non-negative integers. Therefore, to understand the discrete diagram $\Phi$ we fix the values of $\w_u(P)$ and $\w_u(Q)$ and look for possible values of $\V(P,Q)$. As above, if both widths are zero then so is the mixed area, so for the rest of the section we will assume that the widths are not both zero.
The next proposition shows that in general, given $\w_u(P)$ and $\w_u(Q)$, there is a gap in the possible values of $\V(P,Q)$. 

\begin{Prop}\label{P:gap} 
Let $P, Q\in\cP(\Z^2)$ and 
let $\w_u(P)$ and $\w_u(Q)$ be the lattice widths of $P$ and $Q$ in the direction of a primitive vector $u$.
Then either $\V(P,Q)=0$ or $\V(P,Q)\geq\min\left\{d,\frac{\w_u(P)}{d},\frac{\w_u(Q)}{d}\right\}$, where 
$d=\gcd(\w_u(P),\w_u(Q))$.
\end{Prop}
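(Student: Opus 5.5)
Throughout, replace $u$ by $e_2$, as in the discussion above the statement. Put $a=\w_{e_2}(P)$, $b=\w_{e_2}(Q)$ and $d=\gcd(a,b)$. If one of the widths is zero, say $a=0$, then $d=b$ and $\w_u(P)/d=0$. The claimed lower bound is then $0$, which holds trivially. So I will assume $a,b>0$. The plan is to reduce to segments using monotonicity (property (b)) and then do an explicit determinant computation.

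\emph{Step 1: height segments.} Choose points $p_0,p_1\in P$ with minimal and maximal second coordinate, and let $I=\conv\{p_0,p_1\}\subseteq P$. Then $p_1-p_0=(x,a)$ for some $x\in\Z$. In the same way pick $J=\conv\{q_0,q_1\}\subseteq Q$ with $q_1-q_0=(y,b)$. By monotonicity and the formula for the mixed area of segments,
$$\V(P,Q)\geq \V(I,J)=|xb-ya|.$$
Since $d$ divides both $a$ and $b$, the integer $xb-ya$ is divisible by $d$. Hence if $I$ and $J$ are not parallel, then $\V(P,Q)\geq d$, and we are done.

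\emph{Step 2: $I$ parallel to $J$.} Write $(x,a)=k(p,q)$ and $(y,b)=m(p,q)$ with $(p,q)$ primitive and $q>0$. Then $kq=a$ and $mq=b$, so $q$ divides $d$. It follows that $m=b/q\geq b/d$ and $k=a/q\geq a/d$. Assume now $\V(P,Q)>0$. Then $P$ and $Q$ cannot both be contained in lines parallel to $(p,q)$, because parallel segments have mixed area zero. Suppose $P$ is not contained in such a line, and let $n$ be a primitive vector orthogonal to $(p,q)$. Then $\w_n(P)\geq 1$, since it is a positive integer. By monotonicity and \rp{two-instances}(1),
$$\V(P,Q)\geq \V(P,J)=\w_n(P)\,\Vol_1(J)=\w_n(P)\,m\geq m\geq \frac{b}{d}.$$
Symmetrically, if $Q$ is not contained in such a line, then $\V(P,Q)\geq \V(I,Q)\geq k\geq a/d$. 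In every case $\V(P,Q)\geq\min\{d,a/d,b/d\}$.

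The only delicate point is the parallel case in Step 2. There one must notice that the lattice lengths of $I$ and $J$ equal $a/q$ and $b/q$ with $q\mid d$, and that integrality of the width in the orthogonal direction supplies the factor $\geq 1$. Everything else is monotonicity together with the divisibility of the determinant by $d$.
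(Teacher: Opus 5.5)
Your proof is correct and follows essentially the same route as the paper. Both arguments bound $\V(P,Q)$ below by the mixed area of two height-realizing segments, which is a determinant divisible by $d$. In the parallel case both use monotonicity and \rp{two-instances}(1), together with the fact that the relevant segment has lattice length at least $\w_u(Q)/d$; you obtain this via $q\mid d$, the paper via $\frac{y_Q}{d}\mid x_Q$.

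One small point to tidy: take $p_0,p_1$ and $q_0,q_1$ to be vertices, so that they are lattice points and $x,y\in\Z$.
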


\begin{pf}
Applying an appropriate unimodular transformation, we may assume that $u=e_2$
and $y_P=\w_u(P)$, $y_Q=\w_u(Q)$ are the heights of $P$ and $Q$. Furthermore, after applying independent lattice translations, we may assume that $P$ and $Q$ contain lattice segments 
$I_P=\conv\{(0,0), (x_P,y_P)\}$ and $I_Q=\conv\{(0,0), (x_Q,y_Q)\}$, respectively, for some $x_P,x_Q\in\Z$.

By the monotonicity of the mixed volume,
$$\V(P,Q)\geq \V(I_P,I_Q)=|x_Py_Q-x_Qy_P|\in d\Z.$$
Assume $0<\V(P,Q)< d$. Then $x_Py_Q=x_Qy_P$, which implies that $\frac{y_P}{d}\mathrel{|} x_P$ and $\frac{y_Q}{d}\mathrel{|} x_Q$. Since $P, Q$ are not parallel segments (otherwise $\V(P,Q)=0$), either
$P$ has positive lattice width in the direction $u_Q$ orthogonal to $I_Q$ or $Q$ has positive lattice width in the direction $u_P$ orthogonal to $I_P$ (or both). Without loss of generality, we assume the former. Then, again by monotonicity and part (2) of \rp{two-instances}, we have
$$\V(P,Q)\geq \V(P,I_Q)=\w_{u_Q}(P)\Vol_1(I_Q)\geq \gcd(x_Q,y_Q)\geq \frac{y_Q}{d}.$$
The statement now follows.
\end{pf}

The pair of relatively prime integers $\left\{\frac{\w_u(P)}{d},\frac{\w_u(Q)}{d}\right\}$ from \rp{gap} generates a semigroup which turns out to be a part of the discrete diagram.

\begin{Prop}\label{P:semigroup} 
Let $w_1,w_2\in\Z_{\geq 0}$ not both zero and let $d=\gcd(w_1,w_2)$. Then for any $m_1,m_2\in\Z_{\geq 0}$ and primitive $u\in\Z^2$ there exist $P,Q\in\cP(\Z^2)$ such that $\w_u(P)=w_1$, $\w_u(Q)=w_2$, and 
$\V(P,Q)=m_1\frac{w_1}{d}+m_2\frac{w_2}{d}$. 
\end{Prop}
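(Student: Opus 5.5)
We may assume $u=e_2$, because $\Phi$ does not depend on $u$. Then $w_1,w_2$ are the heights of $P,Q$. Write $a=w_1/d$ and $b=w_2/d$; these are relatively prime. The target value is $N=m_1a+m_2b$. Simple constructions fail here, and I record this first because it fixes what the construction must look like.

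For two lattice segments of heights $w_1,w_2$, the proof of \rp{gap} gives $\V=|x_Pw_2-x_Qw_1|\in d\Z$. Adding horizontal summands also keeps everything in $d\Z$: a horizontal segment of length $\ell$ contributes $\ell w_1$ or $\ell w_2$. More generally, whenever one polytope is a segment of lattice length divisible by $d$, part (1) of \rp{two-instances} again gives a multiple of $d$. So to reach values such as $N=a<d$, both $P$ and $Q$ must be genuinely two-dimensional, or at least one must be a primitive segment. Proposition~\ref{P:EG-2d} and the example $P=Q=\conv\{(0,0),(1,1),(1,2)\}$ (heights $2$, mixed area $1=a$ when $d=2$, $a=b=1$) suggest the construction: $P$ and $Q$ should be built from edges that are primitive vectors of small height.

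Concretely, I would fix a unimodular basis adapted to the height functional $h(x,y)=y$. Using $\gcd(a,b)=1$, choose $\alpha,\beta$ with $\alpha b-\beta a=1$, and set $p=(\alpha,a)$ and $q=(\beta,b)$, so that $\det(p,q)=1$, $h(p)=a$ and $h(q)=b$. The plan is to take polygons whose edge vectors are (multiples of) $p$, $q$ and horizontal vectors, for instance
\[
P=\conv\{0,\;p,\;dp+m_2e_1\},\qquad Q=\conv\{0,\;q,\;dq+m_1e_1\},
\]
or a variant obtained by a shear $(x,y)\mapsto(x+ky,y)$ (which preserves heights). I would then compute $\V(P,Q)$ using the normalized outer normals \re{support-function-normalized} of $Q$ and the support function of $P$. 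The heights are clearly $da=w_1$ and $db=w_2$. The hope is that the normal terms pair $p$ against $q$ through $\det(p,q)=1$ and the horizontal pieces against heights, yielding $m_1a+m_2b$ plus a constant offset. That offset would then be removed by adjusting the triangle shape (e.g.\ replacing $dp$ by $(d-1)p$ plus one extra edge) until the base case $m_1=m_2=0$ gives $\V=0$ and each unit of $m_i$ adds exactly $a$ or $b$. Note that $\V=0$ forces parallel segments, so for $m_1=m_2=0$ one should simply take parallel segments $P=d\cdot\conv\{0,p'\}$ and $Q$ analogously, with a common primitive direction; this case should be handled separately.

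The main obstacle is exactly this verification: pinning down a family of polygons for which the support-function computation yields precisely $m_1a+m_2b$ for all $m_1,m_2\ge 0$, rather than a maximum of such terms or a value shifted by a multiple of $d$. My first attempt would be to make the $m_1$-dependence come from $Q$ alone and the $m_2$-dependence from $P$ alone, so that the formula is linear in each. Finally, the degenerate case where one width is $0$ (say $w_1=0$, so $d=w_2$, $a=0$, $b=1$) needs separate treatment. There $P$ is a horizontal segment or a point and $\V(P,Q)=\Vol_1(P)\,w_2$ by part (1) of \rp{two-instances}, so I would check carefully what the statement requires there and treat it directly.
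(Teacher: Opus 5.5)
Your proposal has a genuine gap. It never produces a construction, and the one concrete candidate you write down does not work.

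Take $m_1=0$ in $P=\conv\{0,p,dp+m_2e_1\}$, $Q=\conv\{0,q,dq+m_1e_1\}$. Then $Q=\conv\{0,dq\}$ is a segment of lattice length $d$ with primitive normal $u_q=(b,-\beta)$. We have $\langle p,u_q\rangle=\alpha b-\beta a=1$ and $\langle dp+m_2e_1,u_q\rangle=d+m_2b$. By part (1) of Proposition~\ref{P:two-instances},
\[
\V(P,Q)=d\,(d+m_2b)=d^2+m_2w_2 ,
\]
whereas the target is $m_2b$. So there is not just a constant offset to remove; the slope is also off by a factor $d$. Both defects come from the design:
\begin{itemize}
\item the long edges $dp$ and $dq$ are non-parallel with $\det(p,q)=1$, which forces a cross term $d^2$;
\item the $m_i$-dependence is carried by horizontal displacement, which by your own observation contributes multiples of $w_i$, not of $a$ and $b$.
\end{itemize}
Your heuristic that small values force ``both two-dimensional, or one a primitive segment'' is also inaccurate. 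In the construction below with $m_1=1$, $m_2=0$, $d>1$, $P$ is the segment $\conv\{0,(a,w_1)\}$ of lattice length $a$, which need be neither primitive nor a multiple of $d$. That heuristic is what pushed you toward the unimodular pair $p,q$.

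The missing idea is the opposite choice: make the long edges parallel. The paper uses the common primitive direction $(1,d)$, whose height $d$ divides both widths, so $P\supseteq\conv\{0,a(1,d)\}$ and $Q\supseteq\conv\{0,b(1,d)\}$, and these contribute nothing to each other. The units $a$ and $b$ then come from pairing a short vertical edge of one polygon against the horizontal width ($a$, resp.\ $b$) of the other. A vertical edge leaves the height unchanged only if it sits inside the height range, which is why one writes $m_i=q_id+r_i$ with $0\le r_i<d$. The paper takes
\[
P_0=\conv\{(0,0),(a,w_1),(a,w_1-r_2)\},\qquad Q_0=\conv\{(0,0),(0,r_1),(b,w_2)\},
\]
and gets $\V(P_0,Q_0)=r_1a+r_2b$ from \eqref{e:support-function-normalized}. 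Only at this stage do horizontal edges help. With $I=\conv\{0,e_1\}$, setting $P=P_0+q_2I$ and $Q=Q_0+q_1I$ adds $q_1w_1+q_2w_2=q_1da+q_2db$, for a total of $m_1a+m_2b$.

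Your suspicion about the degenerate case is well founded, and in fact the statement fails there. If $w_1=0$, then $\V(P,Q)=\Vol_1(P)\,w_2\in w_2\Z$, while the statement asks for every $m_2\in\Z_{\ge 0}$; for example, $(w_1,w_2)=(0,2)$ with $m_2=1$ is impossible. Correspondingly, the paper's $P_0$ has height $r_2\neq 0$ there whenever $r_2>0$. Its construction needs $w_1,w_2>0$, so that $r_2\le w_1$ and $r_1\le w_2$.
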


\begin{pf}
By the discussion above, we may assume that $u=e_2$. Let $I$ denote the segment $\conv\{(0,0),(1,0)\}$. By the division algorithm, there exist $q_1,r_1,q_2,r_2\in\mathbb{Z}$ where $0\leq r_1,r_2<d$ such that $m_1=q_1d+r_1$ and $m_2=q_2d+r_2$. Define $P_0=\text{conv}\{(0,0),(\frac{w_1}{d},w_1),(\frac{w_1}{d},w_1-r_2)\}$ and $Q_0=\text{conv}\{(0,0),(0,r_1),(\frac{w_2}{d},w_2)\}$, see \rf{semigroup}. 
\begin{figure}[h]
\begin{center}
\includegraphics[scale=.48]{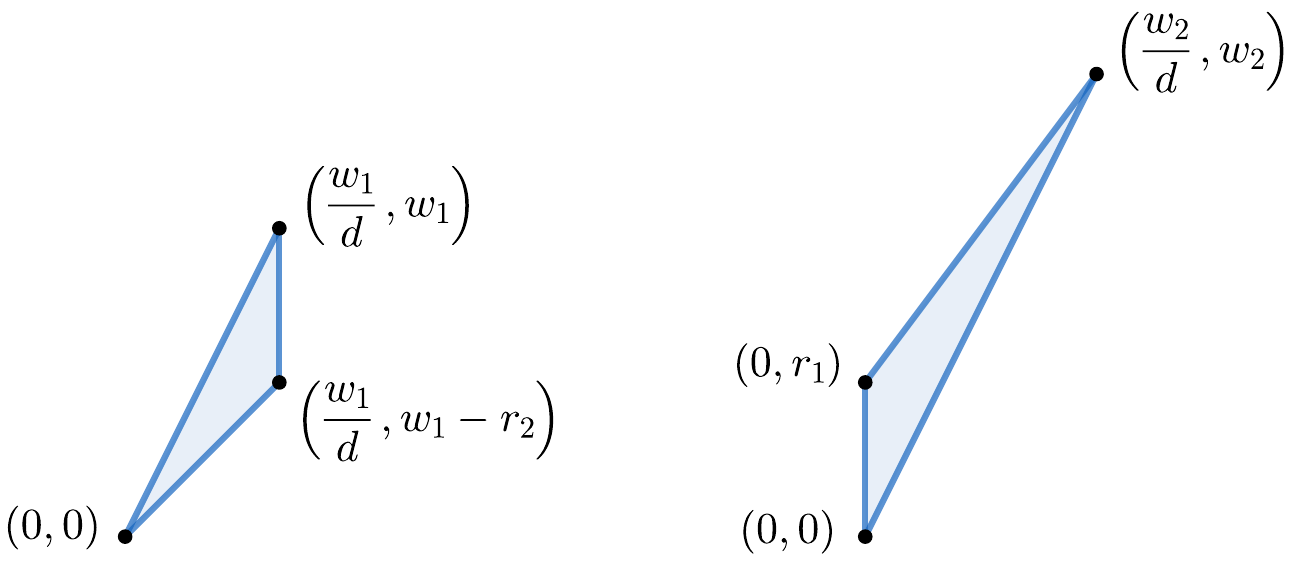} 
\caption{Lattice triangles $P_0$ and $Q_0$}
\label{F:semigroup} 
\end{center}
\end{figure}
To compute $\V(P_0,Q_0)$ we apply \re{support-function-normalized}. The normalized outer normals to $Q_0$ are 
$$u_1=\left(\begin{matrix}-r_1\\ 0\end{matrix}\right),\ \ u_2=\left(\begin{matrix}r_1-w_2\\ w_2/d\end{matrix}\right), \ \ u_3=\left(\begin{matrix}w_2\\ -w_2/d\end{matrix}\right).$$
Thus, we obtain
$$\V(P_0,Q_0)= h_P(u_1)+h_P(u_2)+h_P(u_3)=0+r_1\frac{w_1}{d}+r_2\frac{w_2}{d}.$$ Now, let $P=P_0+q_2I$ and $Q=Q_0+q_1I$. Note that, indeed, $\w_u(P)=w_1$ and $\w_u(Q)=w_2$. Also, by the multilinearity of the mixed area,
\begin{align*}
\V(P,Q)&=\V(P_0,Q_0)+q_1\V(P_0,I)+q_2\V(I,Q_0)+q_1q_2\V(I,I)\\&=r_1\frac{w_1}{d}+r_2\frac{w_2}{d}+q_1w_1+q_2w_2=m_1\frac{w_1}{d}+m_2\frac{w_2}{d}, 
\end{align*}
as desired. 
\end{pf}

One consequence of \rp{semigroup} is that, for given values of $\w_u(P)$ and $\w_u(Q)$,
the mixed volume $\V(P,Q)$ takes all possible values starting with 
$\left(\frac{\w_u(P)}{d}-1\right)\left(\frac{\w_u(Q)}{d}-1\right)$, by the Frobenius theorem (see, for example, \cite[Th 1.2]{BeckRobins}).
The next proposition further explores the saturation behavior of the values of $\V(P,Q)$.
Below we write ``$m\bmod n$'' for the remainder when $m$ is divided by $n$.

\begin{Prop}\label{P:mod-saturation} 
Let $(w_1,v,w_2)\in\Z_{\geq 0}^3$ such that
$v\geq\min\{w_1\bmod w_2,\  w_2\bmod w_1\}$. Then for any primitive $u\in\Z^2$ there exist
$P, Q\in\cP(\Z^2)$ such that $w_1=\w_{u}(P)$, $w_2=\w_{u}(Q)$,
and $v=\V(P,Q)$.
\end{Prop}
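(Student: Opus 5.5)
We will realize $v$ by a lattice triangle $P$ and a primitive lattice segment $Q$, and compute $\V(P,Q)$ with part (1) of \rp{two-instances}. Throughout we assume $w_1,w_2>0$, since the expression $w_1\bmod w_2$ requires positive moduli. As explained after the definition of $\Phi$, we may take $u=e_2$, so the lattice widths become heights. By symmetry of the statement we may assume $w_1\geq w_2$. Write $w_1=qw_2+r$ with $0\leq r<w_2$. Then $w_2\bmod w_1=w_2>r$ when $w_1>w_2$, and $w_2\bmod w_1=0=r$ when $w_1=w_2$. In either case the hypothesis reads $v\geq r$, so we must realize every $v=r+c$ with $c\in\Z_{\geq 0}$.

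Take $Q=\conv\{(0,0),(1,w_2)\}$. It is a primitive segment of height $w_2$, and $(w_2,-1)$ is a primitive vector orthogonal to it. By part (1) of \rp{two-instances},
$$\V(P,Q)=\w_{(w_2,-1)}(P)=\max_{x\in P}(w_2x_1-x_2)-\min_{x\in P}(w_2x_1-x_2),$$
and the extrema are attained at vertices of $P$. So it suffices to find a lattice triangle $P$ of height $w_1$ whose vertices take the values $0$, $-r$ and $c$ under the linear functional $\ell(x)=w_2x_1-x_2$. Then $\w_{(w_2,-1)}(P)=c+r=v$.

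For the construction take the vertices $(0,0)$, with $\ell=0$, and $(q,w_1)$, with $\ell=qw_2-w_1=-r$. These two points already give $P$ height exactly $w_1$. For the third vertex choose $x_1=\lceil c/w_2\rceil$ and $x_2=w_2x_1-c$. Then $\ell(x_1,x_2)=c$ and $0\leq x_2<w_2\leq w_1$. The last inequality is where $w_1\geq w_2$ is used: it guarantees that the third vertex does not change the height of $P$. Setting $P=\conv\{(0,0),(q,w_1),(x_1,x_2)\}$, we get $\w_{e_2}(P)=w_1$, $\w_{e_2}(Q)=w_2$, and $\V(P,Q)=c+r=v$. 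The set $P$ may degenerate to a segment, for example when $c=0$; this causes no problem, since it is still a lattice polytope.

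The main obstacle is finding a construction that reaches every integer $v\geq r$, not merely a semigroup of values. The segment-plus-Minkowski-sum constructions used in \rp{semigroup} only produce values of the form $m_1\frac{w_1}{d}+m_2\frac{w_2}{d}$, so they cannot give this saturation. The remedy is to let a single primitive segment $Q$ measure the width of $P$ in the "tilted" direction $(w_2,-1)$. In that direction the vertices of $P$ can be placed so as to take any prescribed integer values, subject only to the height constraint, which is exactly what the condition $x_2<w_2\leq w_1$ controls. Once this choice is made, the rest is the direct computation above.
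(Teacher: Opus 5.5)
Your proof is correct and is essentially the paper's argument with the roles of $P$ and $Q$ swapped: the primitive segment of the smaller width reduces $\V(P,Q)$, via part (1) of Proposition~\ref{P:two-instances}, to the width of a triangle containing $(0,0)$ and $(q,w_1)$, measured in the direction orthogonal to the segment. The only difference is your third vertex, placed on the level line $\ell=c$ with second coordinate in $[0,w_2)$, which gives every $v\geq r$ in one construction; the paper instead splits into the cases $v<\min\{w_1,w_2\}$ and $v\geq\min\{w_1,w_2\}$, and in the second case adds the Minkowski summand $(q-1)I$.
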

\begin{pf}
As in the previous proof, it is enough to consider $u=e_2$. As before, let $I=\conv\{(0,0),(1,0)\}$.
    Without loss of generality, we may assume $w_1\leq w_2$. By the division algorithm, there exist $q_2,r_2\in \mathbb{Z}$ with $0\leq r_2< w_1$ such that $w_2=q_2w_1+r_2$. Then the assumption $v\geq\min\{w_1\bmod w_2,\  w_2\bmod w_1\}$ translates to $v\geq r_2$.
     
    Take $P=\text{conv}\{(0,0),(1,w_1)\}$. Clearly, $\w_u(P)=\V(P,I)=w_1$. We have two cases depending on the order of $v$ and $w_1$. 

    \textbf{Case 1.} Suppose $v<w_1$. Define $Q=\text{conv}\{(0,0),(q_2,w_2),(q_2,w_2-v)\}$, see \rf{case1}. 
    \begin{figure}[h]
    \begin{center}
    \includegraphics[scale=.48]{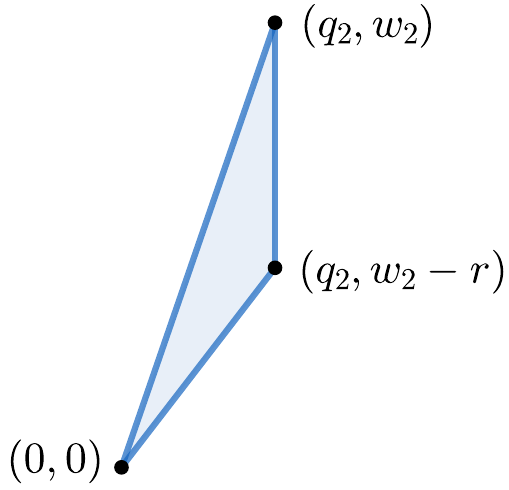} 
    \caption{Lattice triangle $Q$ in Case 1}
    \label{F:case1} 
    \end{center}
    \end{figure}
    Since $P$ is a primitive lattice segment, we can use part (1) of \rp{two-instances} to compute $\V(P,Q)$. Let $u_P=\left(\begin{matrix}-w_1\\ 1\end{matrix}\right)$ be a primitive vector orthogonal to $P$. Then 
    $$\V(P,Q)=\max_{x\in Q}\la x,u_P\ra-\min_{x\in Q}\la x,u_P\ra=(w_2-q_2w_1)-(w_2-v-q_2w_1)=r_2-(r_2-v)=v.$$
    Note that the inequalities $r_2\geq0$ and $r_2-v\leq 0$ justify that these are indeed the maximum and the minimum in the above computation.
    Furthermore, $\w_u(Q)=w_2$, since $w_2-v\geq w_1-v>0$.

    \textbf{Case 2.} Suppose $w_1\leq v$. Dividing $v$ by $w_1$ with the remainder, we get $v=qw_1+r$ for some $0\leq r<w_1$ and $q\geq 1$. Define $Q_0=\text{conv}\{(0,0),(q_2,w_2),(q_2+1,w_2-r)\}$, see \rf{case2}.
    \begin{figure}[h]
    \begin{center}
    \includegraphics[scale=.48]{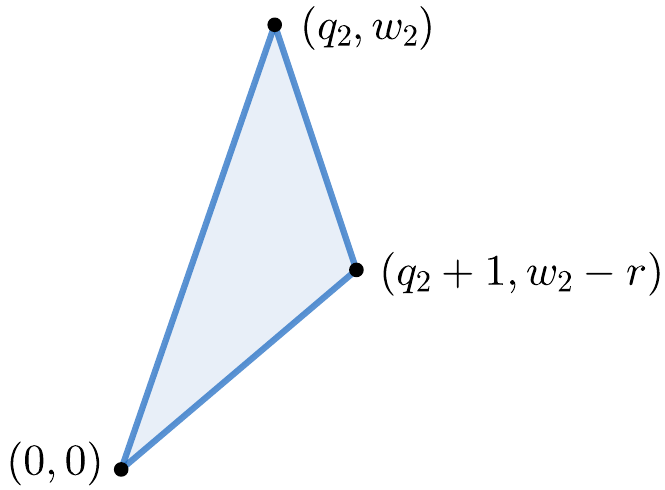} 
    \caption{Lattice triangle $Q_0$ in Case 2}
    \label{F:case2} 
    \end{center}
    \end{figure}
    Similarly to the previous case, we have 
    $$\V(P,Q_0)=\max_{x\in Q_0}\la x,u_P\ra-\min_{x\in Q_0}\la x,u_P\ra=r_2-(r_2-r-w_1)=r+w_1.$$
    Note that $r_2-r-w_1\leq 0$ is the minimum since $0\leq r,r_2<w_1$. Now for $Q=Q_0+(q-1)I$ we have
    $$\V(P,Q)=\V(P,Q_0)+(q-1)\V(P,I)=r+w_1+(q-1)w_1=v.$$ Finally, $\w_u(Q)=\w_u(Q_0)=w_2$, since $w_2-r> r_2-r>0$.  
\end{pf}

We summarize our results about the discrete diagram $\Phi$ in the following theorem. Given $(w_1,w_2)\in\Z_{\geq 0}^2$ we use $\Phi_{w_1,w_2}$ to denote the corresponding section of the diagram
$$\Phi_{w_1,w_2}:=\{v\in\Z_{\geq 0}\,:\, (w_1,v,w_2)\in\Phi\}.$$

\begin{Th}\label{T:diagram}
Let $w_1,w_2\in\Z_{\geq 0}$ not both zero and let $d=\gcd(w_1,w_2)$. %The discrete diagram $\im \Phi$ satisfies the following.
\begin{enumerate}
    \item (Gap) Let $\alpha=\min\left\{d,\frac{w_1}{d},\frac{w_2}{d}\right\}$. Then
    %The interval $\{1,\dots, \alpha-1\}$ is contained  
%    $$G_{w_1,w_2}=\left\{\left(w_1,v,w_2\right)\in\Z_{\geq 0}^3\, :\, 0< v<\alpha\right\}$$ lies 
$\{1,\dots, \alpha-1\}\subseteq\Z_{\geq 0}\setminus\Phi_{w_1,w_2}.$
%in the complement $\Z_{\geq 0}\setminus\Phi_{w_1,w_2}$.
    \item (Semigroup) %The semigroup
    %$$S_{w_1,w_2}=\left\{\Big(w_1,\ m_1\frac{w_1}{d}+m_2\frac{w_2}{d},\ w_2\Big)\, :\, m_1,m_2\in\Z_{\geq 0}\right\}$$
Let $S_{w_1,w_2}=\left\{m_1\frac{w_1}{d}+m_2\frac{w_2}{d}\, :\, m_1,m_2\in\Z_{\geq 0}\right\}$. Then
$S_{w_1,w_2}\subseteq\Phi_{w_1,w_2}.$
    %is contained in $\Phi_{w_1,w_2}$.
    \item (Saturation) %Let $\beta=\min\left\{\left(\frac{\w_u(P)}{d}-1\right)\left(\frac{\w_u(Q)}{d}-1\right),\  w_1\bmod w_2,\  w_2\bmod w_1\right\}$. 
    Let $\beta=\min\left\{w_1\bmod w_2,\  w_2\bmod w_1\right\}$. 
    Then $\Z_{\geq\beta}\subseteq\Phi_{w_1,w_2}.$
    %$$N_{w_1,w_2}=\left\{\left(w_1,v,\ w_2\right)\in\Z_{\geq 0}^3\, :\, v\geq \beta\right\}$$ 
    %$\{v\in\Z_{\geq 0}\,:\,v\geq\beta\}$ is contained in $\Phi_{w_1,w_2}$.
%$$\{v\in\Z_{\geq 0}\,:\,v\geq\beta\}\subseteq\Phi_{w_1,w_2}.$$
\end{enumerate}    
\end{Th}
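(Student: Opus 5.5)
This theorem summarizes the three preceding propositions, so the plan is to derive each part from the corresponding result and to check the translation between formulations. Throughout, $w_1,w_2$ are not both zero, so $d=\gcd(w_1,w_2)>0$ and all quantities are well defined. Recall also that $\Phi$ does not depend on the primitive vector $u$.

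For part (1) I will invoke \rp{gap}. Let $v\in\Phi_{w_1,w_2}$ with $0<v$. Then there exist $P,Q\in\cP(\Z^2)$ with $\w_u(P)=w_1$, $\w_u(Q)=w_2$ and $\V(P,Q)=v$. Since $v\neq 0$, \rp{gap} gives $v\geq\min\{d,w_1/d,w_2/d\}=\alpha$. Therefore no integer in $\{1,\dots,\alpha-1\}$ lies in $\Phi_{w_1,w_2}$. If $\alpha\le 1$ this set is empty, and there is nothing to prove. This happens, for example, when one of the widths is zero, since then $\alpha=0$.

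For part (2) I will invoke \rp{semigroup} directly. An arbitrary element of $S_{w_1,w_2}$ has the form $m_1\frac{w_1}{d}+m_2\frac{w_2}{d}$ with $m_1,m_2\in\Z_{\ge0}$. \rp{semigroup} produces $P,Q$ with the prescribed widths in direction $u$ and exactly this mixed area, so the element lies in $\Phi_{w_1,w_2}$.

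For part (3) I will invoke \rp{mod-saturation}. For any integer $v\ge\beta=\min\{w_1\bmod w_2,\,w_2\bmod w_1\}$, the triple $(w_1,v,w_2)$ satisfies its hypothesis, so it is realized by some $P,Q$ and $v\in\Phi_{w_1,w_2}$.

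The only step needing care is the degenerate case of part (3) where one width is zero, say $w_1=0$. There the expression $w_2\bmod w_1$ is undefined. I would read the minimum as $w_1\bmod w_2=0$, so that $\beta=0$, and check that this convention is consistent with the proof of \rp{mod-saturation}. In that proof, taking $w_1\le w_2$ means division by $w_1=0$, which is a problem. So I would instead argue this case directly. Take $P$ a point and $Q=\conv\{(0,0),(0,w_2)\}$ to realize $v=0$. For $v>0$, take $P=\conv\{(0,0),(v,0)\}$ and $Q=\conv\{(0,0),(0,w_2)\}$. Here I must adjust: for $w_2>0$, \rp{two-instances}(1) gives $\V(P,Q)=vw_2$, not $v$. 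Instead I take $Q=\conv\{(0,0),(1,w_2)\}$ with $P=\conv\{(0,0),(v,0)\}$. Then $\V(P,Q)=|v\cdot w_2-0\cdot 1|=vw_2$ again, so this also fails. However, \rp{semigroup} with $d=w_2$ gives all $v=m_2\cdot 1$, which settles the case. This short check is the main point of attention; everything else is immediate.
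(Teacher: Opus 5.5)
For $w_1,w_2>0$ your argument is the same as the paper's: the theorem is just the conjunction of \rp{gap}, \rp{semigroup} and \rp{mod-saturation}. The problem is the degenerate case, which you say you settle. It cannot be settled, because parts (2) and (3) are false there. If $\w_u(P)=w_1=0$, then $P$ is a point or a lattice segment orthogonal to $u$. By \rp{two-instances}(1), $\V(P,Q)=\Vol_1(P)\,\w_u(Q)=\Vol_1(P)\,w_2$, so $\Phi_{0,w_2}=w_2\Z_{\geq 0}$. Your two trial constructions, which both returned $vw_2$, were not unlucky choices of $Q$; they were showing exactly this forced divisibility. On the other hand, $d=\gcd(0,w_2)=w_2$ gives $S_{0,w_2}=\Z_{\geq 0}$, and $\beta=0$ because $0\bmod w_2=0$. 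For example, with $(w_1,w_2)=(0,2)$ the value $v=1$ lies in $S_{0,2}$ and satisfies $v\geq\beta$, yet $1\notin\Phi_{0,2}$.

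Your appeal to \rp{semigroup} here fails for the same reason you rejected \rp{mod-saturation}: its proof tacitly needs $w_1>0$. With $w_1=0$, the triangle $P_0=\conv\{(0,0),(\frac{w_1}{d},w_1),(\frac{w_1}{d},w_1-r_2)\}$ collapses to $\conv\{(0,0),(0,-r_2)\}$. Its width in direction $e_2$ is $r_2$ rather than $0$ whenever $r_2>0$. In the nondegenerate case, the inequality $r_i<d\leq w_i$ is what makes the heights of $P_0$ and $Q_0$ come out right, and it is lost here. So parts (2) and (3), together with \rp{semigroup} and \rp{mod-saturation}, must be restricted to $w_1w_2>0$. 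If one width is $0$, they hold only when the other width is $1$. Part (1) holds in all cases, since $\alpha=0$ when a width vanishes. Under that restriction, your citation-based proof is complete.
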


\begin{Rem}
    We remark that the above theorem does not give a complete description of the discrete diagram, in general. For example, consider $(w_1,w_2)=(30,50)$ and so $d=10$, $\alpha=3$, $\beta=20$. According to \rt{diagram}, the section $\Phi_{30,50}$ contains the semigroup
    $S_{30,50}=\{0,3, 5, 6\}\cup\Z_{\geq 8}$. In fact, this containment is proper: we claim that $\Phi_{30,50}=\{0,3\}\cup\Z_{\geq 5}$.
    
To show that $7\in\Phi_{30,50}$, one can take the following polytopes with $\w_{e_2}(P)=30$ and $\w_{e_2}(Q)=50$:
$$P=\conv\{(0, 0), (3, 30), (1, 11), (0, 1)\},\quad \text{and}\quad Q=\conv\{(0, 0), (5, 50), (3, 31), (0, 2)\}.$$ A direct application of \re{support-function-normalized} verifies that $\V(P,Q)=7$. 

To show that $4\not\in\Phi_{30,50}$, we use the classification of all pairs $P,Q\in\cP(\Z^2)$ with mixed area 4, see \cite{mvClass2019, Esterov2016} and the database at \url{https://github.com/christopherborger/mixed_volume_classification}. One can split the set of such (unordered) pairs $P,Q$ into three families: 
\begin{enumerate}
    \item (infinite family) $P$ is a lattice segment with $\Vol_1(P)=1,2,4$ and $Q$ has width $\w_{u_P}(Q)=4,2,1$, respectively, where $u_P$ is a primitive vector orthogonal to $P$;
    \item (homothetic) $Q$ is an integer multiple of $P$ (10 pairs up to $\AGL_2(\Z)$-equivalence);
    \item (non-homothetic) $P$ and $Q$ are not homothetic (3 pairs up to $\AGL_2(\Z)$-equivalence, depicted in \rf{non-homothetic}).
    \begin{figure}[h]
    \begin{center}
    \includegraphics[scale=.45]{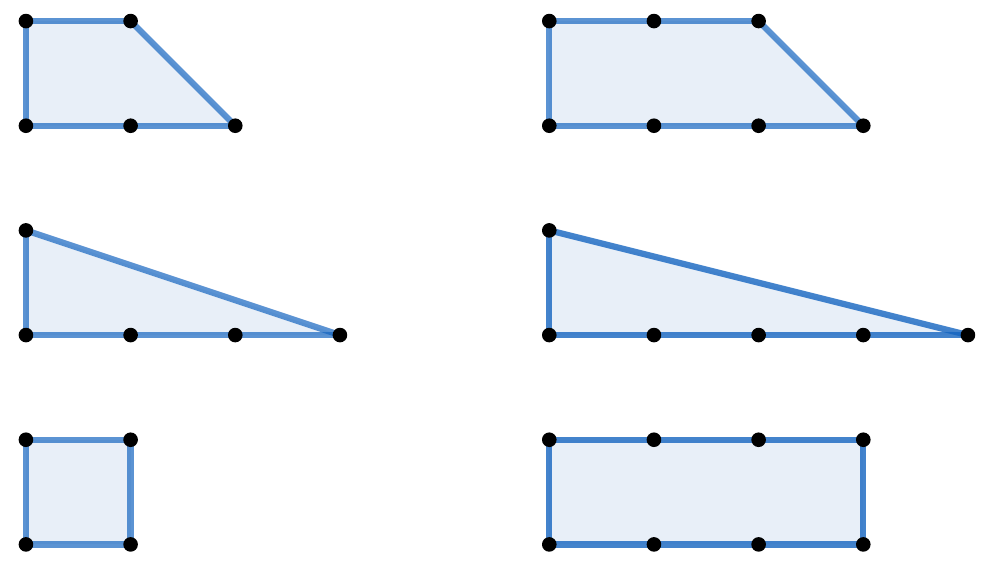} 
    \caption{Three pairs of non-homothetic lattice polygons of mixed area 4}
    \label{F:non-homothetic} 
    \end{center}
    \end{figure}
\end{enumerate} 
To account for the symmetry of the mixed volume, we need to show that for any $P,Q$ as above, $(\w_u(P),\w_u(Q))$ cannot equal $(30,50)$ or $(50,30)$. 

The second family is easy to discard: if $Q=\lambda P$ for some $\lambda \in\N$ then $50=\w_u(Q)=\lambda\w_u(P)=30\lambda$ which is impossible. Similarly, $(\w_u(P),\w_u(Q))\neq (50,30)$.

To deal with the infinite family, after an affine unimodular transformation, we may assume that $P=\conv\{(0,0),(m,0)\}$ for $m=1,2$. (Note that $P$ cannot have lattice length 4 since $4\not{|}\w_u(P)=30$.) If $\V(P,Q)=4$ for some $Q\in\cP(\Z^2)$ then $Q$ has height $4/m$.
Furthermore, if $\w_u(P)=30$ for some primitive $u\in\Z^2$ then $u=(30/m,k)$ where  $\gcd(30/m,k)=1$. Let $(a,b), (c,d)\in Q$ be the points where $\la x,u\ra$ attains the maximum and the minimum on $Q$. Then 
\begin{equation}\label{e:contradiction}
50=\w_u(Q)=\frac{30}{m}(a-c)+k(b-d),
\end{equation}
which implies that $5$ divides $b-d$. However, this is impossible, since $0\leq b,d\leq 4/m$ and $b=d$ would imply $3|50$ in \re{contradiction}. The case when $(\w_u(P),\w_u(Q))=(50,30)$ is completely analogous.

Finally, to take care of the remaining three pairs of non-homothetic polytopes $P,Q$ we do the following. Given a primitive $u\in\Z^2$, the condition $\w_u(P)=w$ can be interpreted as $|\la x-x',u\ra|\leq w$ for all pairs of vertices $x,x'\in P$ with at least one pair giving equality. Therefore, the set of $u$ with $\w_u(P)=w$ is the set of primitive points on the boundary of a rational polygon $R_P$ given by finitely many inequalities of the above form. In particular, the condition
$(\w_u(P),\w_u(Q))=(30,50)$ (or $(50,30)$) corresponds to primitive points in the intersection of two boundaries $\partial R_P\cap\partial R_Q$. We check that in all cases this intersection is empty except for the case when $P$ is a square and $Q$ is a rectangle as in \rf{non-homothetic}.
%when $P=\conv\{(0,0),(1,0),(0,1),(1,1)\}$ and $Q=\conv\{(0,0),(3,0),(0,1),(3,1)\}$. 
In this case $\partial R_P\cap\partial R_Q$ consists of four points $\pm(10,20)$, $\pm(10,-20)$ none of which is primitive. This is illustrated in \rf{polyhedra}.
    \begin{figure}[h]
    \begin{center}
    \includegraphics[scale=.4]{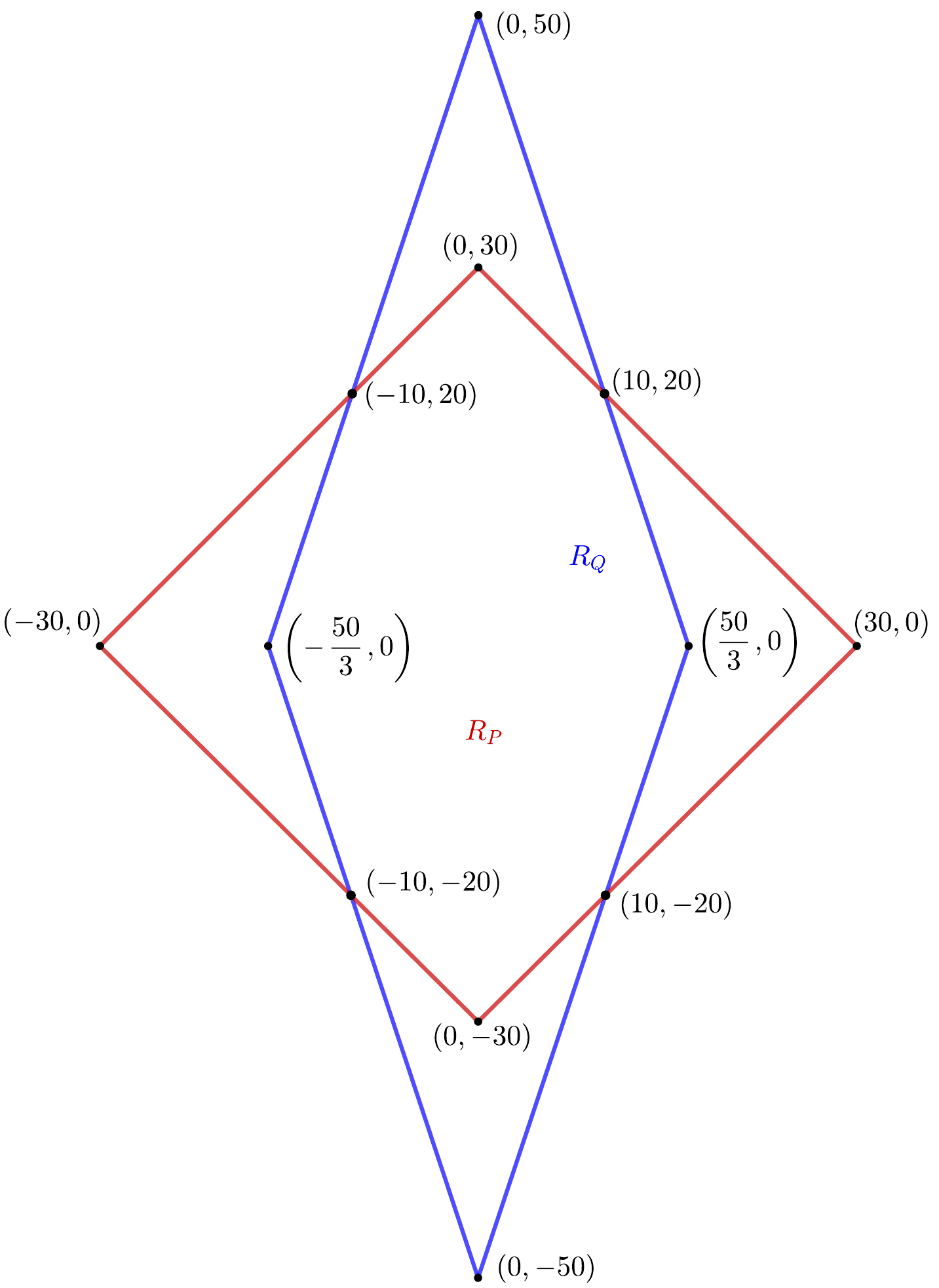} 
    \caption{Two rational polygons $R_P$ and $R_Q$}
    \label{F:polyhedra} 
    \end{center}
    \end{figure}
\end{Rem}

\section{The sets $\SFV_{4,2}(\Z)$ and $\Pl_{4,2}(\Z)$}

Recall from the introduction that the space of square-free parts of volume polynomials of four planar convex bodies
$$\SFV_{4,2}:=\Big\{\sum_{1\leq i<j\leq 4}\!2\vv(K_i,K_j)x_ix_j\,:\, K_1,\dots, K_4\text{ convex bodies in }\R^2\Big\}$$
has a semialgebraic description in terms of the Plücker-type inequalities, \cite[Th 3.2]{AS22}. Namely, $\SFV_{4,2}$ coincides with the {\it Plücker space}
\begin{equation*}\label{e:semialgebraic}
  \Pl_{4,2}:=\Big\{\sum_{1\leq i<j\leq 4}\!v_{ij}x_ix_j\,:\, v_{ij}\in \R_{\geq 0},\, v_{ij}v_{kl}+v_{ik}v_{jl}\geq v_{il}v_{jk},\text{ for }\{i,j\}\sqcup\{k,l\}=[4]\Big\}.  
\end{equation*}
We are interested in the discrete counterpart of $\SFV_{4,2}$ when the convex bodies are lattice polytopes
$$\SFV_{4,2}(\Z):=\Big\{\sum_{1\leq i<j\leq 4}\V(P_i,P_j)x_ix_j\,:\, P_1,\dots,P_4\in\cP(\Z^2)\Big\}.$$
Its elements are square-free quadratic forms with nonnegative integer coefficients satisfying the Plücker-type inequalities. In other words, $\SFV_{4,2}(\Z)$ is a subset of the set
$$\Pl_{4,2}(\Z):=\Big\{\sum_{1\leq i<j\leq 4}\!v_{ij}x_ix_j\, :\, v_{ij}\in \Z_{\geq 0},\,  v_{ij}v_{kl}+v_{ik}v_{jl}\geq v_{il}v_{jk},\text{ for }\{i,j\}\sqcup\{k,l\}=[4]\Big\},$$
which we call the {\it discrete Plücker set}. As we show in \rs{interior-out} the inclusion $\SFV_{4,2}(\Z)\subset \Pl_{4,2}(\Z)$ is, in fact, proper. 

It will be convenient for us to identify a square-free quadratic form $\sum_{1\leq i<j\leq 4} v_{ij}x_ix_j$ with its coefficient vector $v=(v_{ij} \,:\, 1\leq i<j\leq 4 )$ which we represent as a complete weighted graph $\bK_4$ on $\{1,\dots,4\}$ with the weights $v_{ij}$ assigned to its edges, see \rf{graph}. 
    \begin{figure}[h]
    \begin{center}
    \includegraphics[scale=.48]{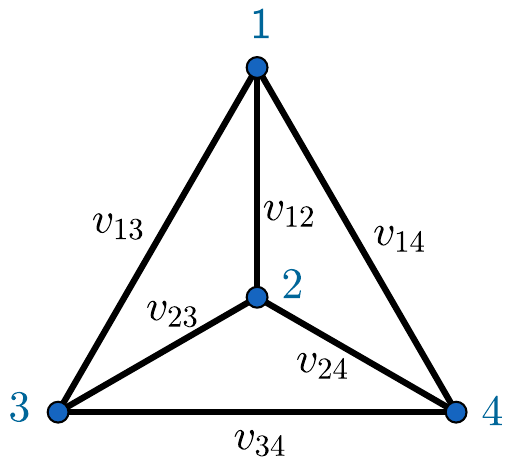} 
    \caption{A complete weighted graph representing a point in $\Pl_{4,2}(\Z)$}
    \label{F:graph} 
    \end{center}
    \end{figure}
Furthermore, we will use graph-theoretic terminology when we speak about elements of $\Pl_{4,2}$. For example, we say ``adjacent coordinates'' of $v\in\Pl_{4,2}$ meaning that the corresponding edges of the complete weighted graph associated with $v$ are adjacent.

There is a natural action of $\Sym_4$ on square-free quadratic forms by permuting the variables $x_1,\dots,x_4$. This corresponds to the $\Sym_4$-action on the coefficient vector $\sigma\cdot v=(v_{\sigma(i)\sigma(j)} \,:\, 1\leq i<j\leq 4 )$. It can be easily seen that $\SFV_{4,2}$, $\Pl_{4,2}$
and their discrete counterparts are invariant under this action. There is also the action of $\R_{>0}^4$ on $\SFV_{4,2}$ given by rescaling the bodies $K_1,\dots,K_4$ by positive real numbers. Analogously, rescaling lattice polytopes $P_1,\dots,P_4$ by positive integers defines a map from $\SFV_{4,2}(\Z)$ to itself. In coordinates, this map is given by
$$v\mapsto\lambda\cdot v,\ \ \text{ where }\ \lambda\cdot v=(\lambda_i\lambda_jv_{ij} \,:\, 1\leq i<j\leq 4 ).$$
for $v\in\SFV_{4,2}(\Z)$ and $\lambda\in\N^4$.

\begin{Def}
    We say that $v\in\Pl_{4,2}(\Z)$ is {\it $\bK_4$-primitive} if at each node the three adjacent coordinates of $v$ are relatively prime, i.e., for every $i\in[4]$ we have $\gcd(v_{ij} : j\in[4], j\neq i )=1$.
\end{Def}

The action of $\R_{>0}^4$ on $\SFV_{4,2}$ allows to reduce the proof of $\SFV_{4,2}=\Pl_{4,2}$ to the case when all but two adjacent coordinates of $v\in\SFV_{4,2}$ are equal to 1, see \cite[Sec 5]{AS22}. The following proposition gives a discrete analog of this statement. 

\begin{Prop}\label{P:reduction}
    Let $w\in\Pl_{4,2}(\Z)$ such that no three adjacent coordinates of $w$ are zero. Then $w=\lambda\cdot v$ for some $\lambda\in\N^4$ and $\bK_4$-primitive $v\in\Pl_{4,2}(\Z)$. Moreover, if $v$ lies in $\SFV_{4,2}(\Z)$ then so does $w$.
\end{Prop}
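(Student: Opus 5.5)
Everything here happens in the ring of integers, so I plan to build $\lambda$ by repeatedly pulling out gcds at the nodes. Put $g_i(w)=\gcd(w_{ij}: j\neq i)$. By hypothesis not all three coordinates adjacent to node $i$ vanish, so $g_i(w)$ is a positive integer. If $g_i(w)>1$ for some $i$, choose a prime $p\mid g_i(w)$ and define $w'$ by $w'_{ij}=w_{ij}/p$ for every $j\neq i$, leaving the remaining coordinates unchanged. In that case $w=\mu\cdot w'$, where $\mu=(1,\dots,p,\dots,1)$ has $p$ in the $i$-th slot. The next step is to confirm that $w'$ still belongs to $\Pl_{4,2}(\Z)$. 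Its coordinates are non-negative integers. Each Plücker-type inequality $w_{ij}w_{kl}+w_{ik}w_{jl}\geq w_{il}w_{jk}$ has exactly one factor in each of its three products that involves node $i$, because $\{i,j\}\sqcup\{k,l\}=[4]$. Hence all three monomials scale by the same factor $1/p$, and the inequality is preserved. More generally, for any $\lambda\in\R_{>0}^4$ the inequalities are homogeneous under $v\mapsto\lambda\cdot v$, each monomial being multiplied by $\lambda_1\lambda_2\lambda_3\lambda_4$. Moreover, $w'$ has the same zero pattern as $w$, so it again has no three adjacent zero coordinates.

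I will iterate this step. Each application strictly lowers the positive integer $\sum_{i<j}w_{ij}$: at least one of the divided coordinates is nonzero, and dividing it by $p\geq 2$ makes it smaller. The process therefore terminates at some $v$ with $g_i(v)=1$ for every $i$, which means $v$ is $\bK_4$-primitive. Composing the individual rescalings gives $w=\lambda\cdot v$, with $\lambda\in\N^4$ equal to the product of the $\mu$'s taken coordinatewise. I expect no serious obstacle in this part. The only thing to keep track of is that the hypothesis on zeros is what guarantees each $g_i$ is well defined and positive. Without it, a node with all three adjacent coordinates zero would have $\gcd=0$ and could never be made primitive.

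For the second assertion, suppose $v=(\V(P_i,P_j))$ for some $P_1,\dots,P_4\in\cP(\Z^2)$. Since $\lambda_i\in\N$, the dilates $\lambda_iP_i$ are again lattice polytopes. By Minkowski-linearity (property (e)) we get $\V(\lambda_iP_i,\lambda_jP_j)=\lambda_i\lambda_j\V(P_i,P_j)=w_{ij}$, and hence $w\in\SFV_{4,2}(\Z)$. This is exactly the statement, noted before the definition, that rescaling by positive integers maps $\SFV_{4,2}(\Z)$ to itself.
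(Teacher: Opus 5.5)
Your proof is correct and follows essentially the paper's route: divide out the common factors at each node, then realize $w$ by the dilates $\lambda_iP_i$. You do the division one prime at a time and stop via the decreasing sum $\sum_{i<j} w_{ij}$, whereas the paper writes $\lambda$ in closed form as successive gcds ($\lambda_1=\gcd(w_{12},w_{13},w_{14})$, $\lambda_2=\gcd(w_{12}/\lambda_1,w_{23},w_{24})$, and so on); your explicit check that the Plücker-type inequalities survive the rescaling, which the paper leaves implicit, is a welcome addition.
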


\begin{pf} Let the coordinates of $\lambda\in\N^4$ be $\lambda_1=\gcd(w_{12}, w_{13}, w_{14})$, 
     $\lambda_2=\gcd(w_{12}/\lambda_1, w_{23}, w_{24})$,
     $\lambda_3=\gcd(w_{13}/\lambda_1, w_{23}/\lambda_2, w_{34})$, and
     $\lambda_4=\gcd(w_{14}/\lambda_1, w_{24}/\lambda_2, w_{34}/\lambda_3)$. Define $v_{ij}=w_{ij}/\lambda_i\lambda_j$ for $1\leq i<j\leq 4$. Then $v$ is an integer vector satisfying $\lambda\cdot v=w$. One can readily see that $v$ is $\bK_4$-primitive. 
     Finally, if there exist $P_1,\dots, P_4\in\cP(\Z^2)$ such that
     $v_{ij}=\V(P_i,P_j)$ for $1\leq i<j\leq 4$ then $w_{ij}=\V(\lambda_iP,\lambda_jP_j)$ and, hence, $w\in\SFV_{4,2}(\Z)$.
\end{pf}

\section{The boundary of $\Pl_{4,2}(\Z)$}\label{S:boundary}
In this section we show that every boundary point of the discrete Plücker set $\Pl_{4,2}(\Z)$ is realizable as a point in $\SFV_{4,2}(\Z)$. By definition,
we say that $v\in\Pl_{4,2}(\Z)$ is a {\it boundary point}
if $v$ lies in the boundary of $\Pl_{4,2}$, i.e., if at least one of the Plücker-type inequality becomes equality at $v$; otherwise we say $v$ is an {\it interior point}. In fact, we will see that every boundary point of $\Pl_{4,2}(\Z)$ can be realized using four (possibly degenerate to a point) lattice segments.

We will need the following elementary property of the gcd.
\begin{Lem}\label{L:simpleNT} Let $a_1,a_2,b\in\Z$. If $\gcd(a_1,a_2,b)=1$ then
$$\gcd\left(\frac{b}{\gcd(a_1,b)},\frac{b}{\gcd(a_2,b)}\right)=\frac{b}{\gcd(a_1,b)\gcd(a_2,b)}.$$ 
\end{Lem}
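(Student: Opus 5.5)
Write $g_1=\gcd(a_1,b)$ and $g_2=\gcd(a_2,b)$. The plan is to show first that $g_1$ and $g_2$ are coprime. Then $g_1g_2$ divides $b$, the right-hand side is an integer, and the identity reduces to a statement about how the coprime parts split the prime factorization of $b$. I would verify that statement one prime at a time, using $p$-adic valuations; this avoids any manipulation of gcd identities beyond the basic ones.

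\textbf{Step 1: coprimality.} Any common divisor of $g_1$ and $g_2$ divides $a_1$, $a_2$ and $b$. It therefore divides $\gcd(a_1,a_2,b)=1$, so $\gcd(g_1,g_2)=1$. Since both $g_1$ and $g_2$ divide $b$, their product $g_1g_2$ divides $b$.

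\textbf{Step 2: prime-by-prime comparison.} Fix a prime $p$ and set $\beta=\nu_p(b)$, $\gamma_1=\nu_p(g_1)$, $\gamma_2=\nu_p(g_2)$. By Step 1, at least one of $\gamma_1,\gamma_2$ is $0$, and both are at most $\beta$.
\begin{itemize}
\item The left-hand side has valuation $\min(\beta-\gamma_1,\beta-\gamma_2)=\beta-\max(\gamma_1,\gamma_2)$.
\item The right-hand side has valuation $\beta-\gamma_1-\gamma_2$.
\end{itemize}
Because one of $\gamma_1,\gamma_2$ vanishes, $\max(\gamma_1,\gamma_2)=\gamma_1+\gamma_2$, so the two valuations agree. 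Both sides are positive (taking gcds as positive), hence they are equal.

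\textbf{Degenerate cases.} These should be handled separately, and they are the only real obstacle. If $b=0$, then $\gcd(a_1,0)=|a_1|$, and the expressions make sense only when $a_1,a_2\neq0$. The hypothesis then forces $\gcd(a_1,a_2)=1$, and both sides equal $0$, with the convention $\gcd(0,0)=0$. I would also adopt the convention that $b$ is replaced by $|b|$, or equivalently that gcds are taken positive, so that signs cause no trouble.
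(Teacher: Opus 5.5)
Your proof is correct and is essentially the paper's argument: the paper likewise first gets $\gcd(d_1,d_2)=1$ and $d_1d_2\mid b$ from the hypothesis, and then replaces your valuation check by the one-line computation $\gcd\bigl(\frac{b}{d_1},\frac{b}{d_2}\bigr)=\gcd\bigl(d_2\frac{b}{d_1d_2},\,d_1\frac{b}{d_1d_2}\bigr)=\frac{b}{d_1d_2}\gcd(d_1,d_2)=\frac{b}{d_1d_2}$. One small correction to your sign remark: taking gcds positive is not equivalent to replacing $b$ by $|b|$ (e.g.\ $b=-6$, $a_1=2$, $a_2=3$ gives $1$ on the left and $-1$ on the right), so the identity needs $b>0$, which the paper tacitly assumes when it applies the lemma with $b=v_{14}>0$.
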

\begin{pf} Denote $d_1=\gcd(a_1,b)$ and $d_2=\gcd(a_2,b)$. Since $\gcd(a_1,a_2,b)=1$ we have $\gcd(d_1,d_2)=1$.
This implies that $(d_1d_2)|b$. Now
$$\gcd\left(\frac{b}{d_1},\frac{b}{d_2}\right)=\gcd\left(d_2\frac{b}{d_1d_2},d_1\frac{b}{d_1d_2}\right)=\frac{b}{d_1d_2}\gcd(d_1,d_2)=\frac{b}{d_1d_2}.$$
\end{pf}

\begin{Th} Let $v=(v_{12},\dots, v_{34})$ be a boundary point of $\Pl_{4,2}(\Z)$. Then there exist
lattice segments $P_1,\dots, P_4\in\cP(\Z^2)$ such that $v_{ij}=\V(P_i,P_j)$ for $1\leq i<j\leq 4$.
\end{Th}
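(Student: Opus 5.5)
The plan is to realize $v$ by segments $P_i=\conv\{0,p_i\}$ with $p_i\in\Z^2$, so that $\V(P_i,P_j)=|\det(p_i,p_j)|$. The tool is the Plücker identity
$$\det(p_1,p_2)\det(p_3,p_4)-\det(p_1,p_3)\det(p_2,p_4)+\det(p_1,p_4)\det(p_2,p_3)=0,$$
valid for any four vectors in $\R^2$. Using the $\Sym_4$-action, I assume the equality holding at $v$ is $v_{12}v_{34}+v_{14}v_{23}=v_{13}v_{24}$. I then look for $p_i$ with the signed determinants $\det(p_1,p_j)=v_{1j}$, $\det(p_2,p_3)=v_{23}$, $\det(p_2,p_4)=v_{24}$, $\det(p_3,p_4)=v_{34}$. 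These signs are consistent with the identity exactly when the boundary equality holds.

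\textbf{Reduction.} First I apply \rp{reduction} to reduce to a $\bK_4$-primitive $v$; note that $\lambda\cdot v$ preserves the equality. The degenerate case where some node has all three adjacent coordinates zero is handled directly: set that $P_i$ to a point and realize the remaining three weights by \rp{sf-3}.

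\textbf{Construction.} For primitive $v$, choose $p_1=(1,0)$, so that $\det(p_1,p_j)$ is the second coordinate of $p_j$. Write $p_j=(a_j,v_{1j})$ for $j=2,3,4$. Up to sign conventions, the remaining conditions become the linear system
$$a_2v_{13}-a_3v_{12}=v_{23},\quad a_2v_{14}-a_4v_{12}=v_{24},\quad a_3v_{14}-a_4v_{13}=v_{34}.$$
The combination $v_{14}\cdot(\text{first})-v_{13}\cdot(\text{second})+v_{12}\cdot(\text{third})$ vanishes identically on the left side. On the right side it gives $v_{14}v_{23}-v_{13}v_{24}+v_{12}v_{34}$, which is $0$ by the boundary equality. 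Hence, if $v_{12}\neq 0$, any integer solution of the first two equations automatically satisfies the third. If $v_{12}=0$, I permute the labels so that some coordinate at node $1$ plays this role. Primitivity at node $1$ guarantees that one of $v_{12},v_{13},v_{14}$ is nonzero. The sub-case where further coordinates vanish should collapse to a near-trivial explicit construction.

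\textbf{Arithmetic core.} This is the main obstacle: finding $a_2\in\Z$ with
$$a_2v_{13}\equiv v_{23}\pmod{v_{12}}\quad\text{and}\quad a_2v_{14}\equiv v_{24}\pmod{v_{12}}.$$
Put $d_1=\gcd(v_{12},v_{13})$ and $d_2=\gcd(v_{12},v_{14})$.
\begin{itemize}
\item \emph{Each congruence is solvable.} The equality $v_{14}v_{23}=v_{13}v_{24}-v_{12}v_{34}$ gives $d_1\mid v_{14}v_{23}$. Since $\gcd(v_{12},v_{13},v_{14})=1$, we have $\gcd(d_1,v_{14})=1$, hence $d_1\mid v_{23}$. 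Symmetrically, $d_2\mid v_{24}$.
\item \emph{Reduced congruences.} The solutions are then determined modulo $v_{12}/d_1$ and $v_{12}/d_2$, respectively. By \rl{simpleNT}, these moduli have $\gcd$ equal to $v_{12}/(d_1d_2)$.
\item \emph{Compatibility for CRT.} Both residue classes must agree modulo $m=v_{12}/(d_1d_2)$. Let $b$ solve the first congruence and $c$ the second. Then $(b-c)v_{13}v_{14}\equiv v_{23}v_{14}-v_{24}v_{13}\equiv -v_{12}v_{34}\equiv 0$ modulo $v_{12}$, which in particular holds modulo $m$ after dividing out appropriately. Since $\gcd(m,v_{13}v_{14}/(d_1d_2))=1$, this yields $b\equiv c \pmod m$. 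I expect the careful bookkeeping of the factors $d_1,d_2$ in this step to need the most attention.
\end{itemize}
Once $a_2$ is found, $a_3$ and $a_4$ are integers by the two congruences. The third equation holds by the dependency above, and taking absolute values of the determinants gives $\V(P_i,P_j)=v_{ij}$.
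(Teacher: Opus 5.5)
Your proposal is correct and is essentially the paper's proof: the same segments $P_1=\conv\{0,e_1\}$ and $P_j=\conv\{0,a_je_1+v_{1j}e_2\}$, the same reduction to the $\bK_4$-primitive case, and the same use of \rl{simpleNT} together with the generalized Chinese Remainder Theorem. The only difference is the pivot. You solve modulo $v_{12}$ for the first coordinate of $p_2$, while the paper solves modulo $v_{14}$ for the first coordinate of $p_4$; these are equivalent via the $2\leftrightarrow 4$ symmetry of $v_{13}v_{24}=v_{14}v_{23}+v_{12}v_{34}$, and that swap is the only relabeling you may use when $v_{12}=0$, with the leftover case $v_{12}=v_{14}=0$ forcing $v_{13}=1$, $v_{24}=0$ and being trivial, whereas the paper handles its zero-pivot case $v_{14}=0$ by a separate explicit construction.
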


\begin{pf} Since $\Pl_{4,2}(\Z)$ is invariant under the $\Sym_4$-action described in the previous section, we may assume
without loss of generality that $v$ lies on the part of the boundary given by
\begin{equation}\label{e:boundary}
v_{13}v_{24}=v_{14}v_{23}+v_{12}v_{34}.
\end{equation}
Suppose $v$ has three adjacent zero coordinates, say, $v_{12}=v_{13}=v_{14}=0$. By \rp{sf-3} there exist $P_2,P_3,P_4\in\cP(\Z^2)$ such that $v_{ij}=\V(P_i,P_j)$ for $2\leq i<j\leq 4$. Then, letting $P_1=\{(0,0)\}$ we see that
$0=v_{1j}=\V(P_1,P_j)$ for $j=2,3,4$, as well.

Now suppose that $v$ does not have three adjacent zero coordinates. By \rp{reduction} we may assume that $v$ is $\bK_4$-primitive. In particular, $\gcd(v_{12}, v_{13}, v_{14})=1$.

First, assume $v_{14}=0$. Then \re{boundary} becomes $v_{13}v_{24}=v_{12}v_{34}$ and, since
$\gcd(v_{12}, v_{13})=1$, we see that $v_{24}=cv_{12}$ and $v_{34}=cv_{13}$ for some $c\in\N$. 
Also, there exist $a,b\in\Z$ such that $av_{13}-bv_{12}=v_{23}$.
We put $$P_1=\conv\{0,e_1\},\quad P_2=\conv\{0,ae_1+v_{12}e_2\},\quad P_3=\conv\{0,be_1+v_{13}e_2\},\quad P_4=\conv\{0,ce_1\}.$$
One can readily see that $v_{ij}=\V(P_i,P_j)$ for $1\leq i<j\leq 4$.

Now, assume $v_{14}>0$. We claim that there exist $a,b,c\in\Z$ such that the polytopes
$$P_1=\conv\{0,e_1\},\ P_2=\conv\{0,ae_1+v_{12}e_2\},\ P_3=\conv\{0,be_1+v_{13}e_2\},\ P_4=\conv\{0,ce_1+v_{14}e_2\}$$
satisfy $v_{ij}=\V(P_i,P_j)$ for $1\leq i<j\leq 4$, i.e., satisfy the linear system
$$\begin{cases}
av_{13}-bv_{12}=v_{23} &\\
av_{14}-cv_{12}=v_{24} &\\
bv_{14}-cv_{13}=v_{34}. &\\
\end{cases}$$
Over $\Q$ this system has a 1-parameter family of solutions
\begin{equation}\label{e:family}
a=\frac{cv_{12}+v_{24}}{v_{14}},\quad
b=\frac{cv_{13}+v_{34}}{v_{14}},\quad c\in\Q.
\end{equation}
Thus, we need to show that there exists $c\in\Z$ such that $v_{14}$ divides both $cv_{12}+v_{24}$ and $cv_{13}+v_{34}$.

First, we convert the fractions in \re{family} to lowest terms. For this we introduce
$$m_i=\frac{v_{14}}{\gcd(v_{1i},v_{14})},\quad \ u_{1i}=\frac{v_{1i}}{\gcd(v_{1i},v_{14})},\quad
\text{and}\quad 
\ u_{i4}=\frac{v_{i4}}{\gcd(v_{1i},v_{14})},\   \text{ for}\ \  i=2,3.$$
The fact that $u_{i4}$ are integers follows from \re{boundary} and the assumption $\gcd(v_{12}, v_{13}, v_{14})=1$.
Then our goal is to show the existence of $c\in\Z$ satisfying the system of congruences
$$\begin{cases}
cu_{12}+u_{24}\equiv 0 \;\bmod\; m_2 &\\
cu_{13}+u_{34}\equiv 0 \;\bmod\; m_3.
\end{cases}$$
Let $u_{1i}^{-1}$ denote the inverse of $u_{1i}$ modulo $m_i$, which exists since 
$\gcd(u_{1i},m_i)=1$, by construction. Then the above system can be written as
\begin{equation}\label{e:CRT}
\begin{cases}
c\equiv -u_{12}^{-1}u_{24} \;\bmod\; m_2 &\\
c\equiv -u_{13}^{-1}u_{34} \;\bmod\; m_3.
\end{cases}
\end{equation}

The existence of a solution to \re{CRT} now follows from the Generalized Chinese Remainder Theorem (see, for example, \cite[Th 3.12]{ENT98}). Indeed, let $g=\gcd(m_2,m_3)$. By \rl{simpleNT}, $$g=\frac{v_{14}}{\gcd(v_{12},v_{14})\gcd(v_{13},v_{14})}.$$
Thus \re{boundary} is equivalent to 
$u_{13}u_{24}=gv_{23}+u_{12}u_{34}$. This implies 
$u_{12}^{-1}u_{24} \equiv u_{13}^{-1}u_{34} \bmod g$, as required.

\end{pf}

\section{Realizable family in the interior of $\Pl_{4,2}(\Z)$}\label{S:interior-in}

In this section we describe an infinite family of interior points in the discrete Plücker set $\Pl_{4,2}(\Z)$ which can be realized as points in $\SFV_{4,2}(\Z)$.

\begin{Th}\label{T:two-ones} Let $v$ be an interior point of $\Pl_{4,2}(\Z)$ with two adjacent coordinates being 1. Then $v\in\SFV_{4,2}(\Z)$.  
\end{Th}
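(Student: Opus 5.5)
By the $\Sym_4$-symmetry of $\Pl_{4,2}(\Z)$ and $\SFV_{4,2}(\Z)$, I would assume the two adjacent unit coordinates are $v_{12}=v_{13}=1$. Write $k=v_{23}$, $h=v_{14}$, $A=v_{24}$, $B=v_{34}$. With $v_{12}=v_{13}=1$, interiority of $v$ means the three strict inequalities
$$A+B>hk,\qquad B+hk>A,\qquad hk+A>B.$$
In particular $hk>|A-B|\geq 0$, so $k\geq 1$ and $h\geq 1$. (If $k=0$, the last two inequalities would give $B>A>B$.) So the data is a ``lattice triangle inequality'' for $(hk,A,B)$.

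\textbf{Realization by segments for $P_1,P_2,P_3$.} I would take
$$P_1=\conv\{0,e_1\},\qquad P_2=\conv\{0,e_2\},\qquad P_3=\conv\{0,ke_1+e_2\}.$$
Then $\V(P_1,P_2)=\V(P_1,P_3)=1$ and $\V(P_2,P_3)=k$. By part (1) of \rp{two-instances}, for any lattice polygon $P_4$ we have
$$\V(P_1,P_4)=\w_{e_2}(P_4),\qquad \V(P_2,P_4)=\w_{e_1}(P_4),\qquad \V(P_3,P_4)=\w_{(1,-k)}(P_4).$$
So the theorem reduces to a purely lattice-width statement. For integers $h,k\geq 1$ and $A,B\geq 0$ satisfying the three strict inequalities above, I need a lattice polygon $P_4$ with
$$\max y-\min y=h,\qquad \max x-\min x=A,\qquad \max(x-ky)-\min(x-ky)=B.$$

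\textbf{Constructing $P_4$.} I would look for $P_4$ as a lattice triangle, or quadrilateral,
$$\conv\{(0,0),(s,h),(t,r)\},\qquad 0\le r\le h,$$
which fixes the height to be $h$. The three linear functionals then take the values $\{0,s,t\}$ for $x$ and $\{0,\,s-kh,\,t-kr\}$ for $x-ky$, and I would choose $s,t,r$ to make the two ranges equal $A$ and $B$. By the symmetry swapping $A$ and $B$ (the unimodular map $(x,y)\mapsto(-x+ky,\,y)$ exchanges the two functionals up to sign), I may assume $A\geq B$. A natural first choice is to take the point $(s,h)$ so that $s$ lies between the extremes. For example, aim for $x$-range $[\min(0,s,t),\,\max(0,s,t)]$ of length $A$ with $t=A$ or $t=s+A$, and then solve for $r$ so that $t-kr$ lands at the required extreme of the $x-ky$ range. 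The inequalities $hk-|A-B|>0$ and $A+B-hk>0$ should be exactly what guarantees that $r$ can be taken in $[0,h]$ with integer values, since $r$ moves $t-kr$ in steps of $k$ and we have a window of slack. If a single triangle does not suffice because of divisibility by $k$, I would take Minkowski sums with the segments $\conv\{0,e_1\}$ and $\conv\{0,ke_1+e_2\}$. The first raises the $x$-width and the $(x-ky)$-width each by $1$, keeping the height fixed. The second raises the height by $1$, the $x$-width by $k$, and leaves the $(x-ky)$-width unchanged. These adjustments reduce to a small base case, much like the division-algorithm trick in \rp{mod-saturation}.

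\textbf{Main obstacle.} I expect the hardest step to be the explicit choice of $P_4$ covering all residues, i.e.\ handling $A-B$ and $hk-A-B$ modulo $k$ uniformly. I would first try to place the vertex $(t,r)$ on the row $y=r$ with $r=\lceil (hk-(A-B))/(2k)\rceil$ or a similar value. I would then verify the three range computations by a short case analysis of which vertex attains each maximum and minimum, as in the proof of \rp{mod-saturation}.
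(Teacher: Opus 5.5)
You have the same reduction as the paper. With $v_{12}=v_{13}=1$ and $P_1=\conv\{0,e_1\}$, $P_2=\conv\{0,e_2\}$, $P_3=\conv\{0,ke_1+e_2\}$, the theorem becomes: for $h,k\ge 1$ and $|A-B|<hk<A+B$ there is a lattice polygon $P_4$ with $\w_{e_2}(P_4)=h$, $\w_{e_1}(P_4)=A$, $\w_{(1,-k)}(P_4)=B$. But that existence statement is the whole content of the theorem, and you do not prove it. You give a template $\conv\{(0,0),(s,h),(t,r)\}$ and an unverified claim that the Plücker inequalities ``should be exactly what guarantees'' a suitable integer $r\in[0,h]$. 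You then leave the explicit choice of $P_4$ open as the main obstacle. The concrete recipe you sketch fails as stated. For $h=2$, $k=3$, $A=5$, $B=2$ it gives $r=1$. With $t=A$, $0\le s\le A$, the $(x-3y)$-width is $8-s\ge 3$. With $t=s+A$, $-A\le s\le 0$, it is at least $6-s\ge 6$. Neither gives $B=2$, although the template does contain a valid triangle, $\conv\{(0,0),(5,2),(4,1)\}$. Your diagnosis of the difficulty is also off: residues modulo $k$ play no role.

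The missing idea is that the only dichotomy is the parity of $A+B-hk$, and that the base polygon's third vertex should sit on the top row $r=h$. Widths are additive under Minkowski sums. Write $A+B-hk=2m$ or $2m+1$ with $m\ge 0$, and set $j=A-m$.

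In the even case, $0\le j\le hk$, and $P_4=m\conv\{0,e_1\}+\conv\{0,je_1+he_2\}$ works. The segment has widths $(h,j,hk-j)$.

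In the odd case, $|A-B|\le hk-1$, which forces $1\le j\le hk$. Then $P_4=m\conv\{0,e_1\}+\conv\{0,(j-1)e_1+he_2,\,je_1+he_2\}$ works, since this triangle has widths $(h,j,hk-j+1)$.

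These are the paper's parallelogram (rays $L_k$) and trapezoid $\conv\{(0,0),(j-1,h),(A,h),(A-j,0)\}$ (rays $M_k$); the paper's index $k$ is your $j$. Your Minkowski adjustment by $\conv\{0,e_1\}$ is the right mechanism. The adjustment by $\conv\{0,ke_1+e_2\}$ changes the height and is not needed, and the reduction to $A\ge B$ is unnecessary.
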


\begin{pf} As before, by the $\Sym_4$-invariance of $\Pl_{4,2}(\Z)$ we may assume that $v_{12}=v_{13}=1$. Then in the coordinates 
$x=v_{24}$ and $y=v_{34}$ the intersection $\Pl_{4,2}\cap\{v_{12}=v_{13}=1\}$ is a closed half-strip given by the inequalities $|x-y|\leq v_{14}v_{23}\leq x+y$ (see
\rf{half-strip} for the case of $v_{14}v_{23}=3$). 
Note that $\Pl_{4,2}(\Z)\cap\{v_{12}=v_{13}=1\}$ is the union of lattice points on two families of rays. The first family is given by
$$L_k=\{(x,y)\in\R^2 : y=x+v_{14}v_{23}-2k,\ x\geq k\},\quad\text{for }k=0,1,\dots, v_{14}v_{23}.$$
The corresponding lattice points are colored blue. 
The second family is 
$$M_k=\{(x,y)\in\R^2 : y=x+v_{14}v_{23}-2k+1,\ x\geq k\},\quad\text{for }k=1,\dots, v_{14}v_{23},$$
and its lattice points are colored green.

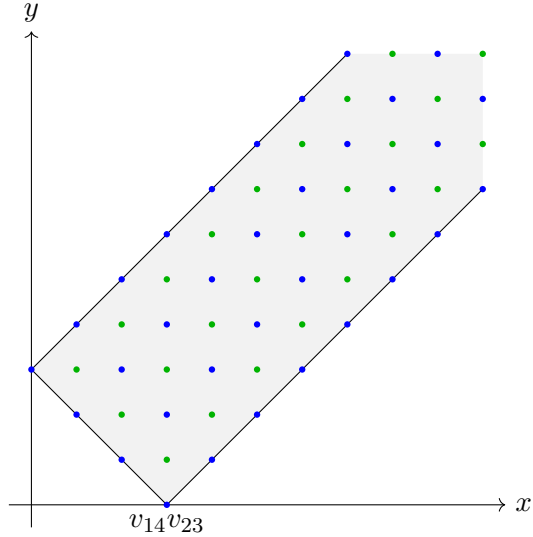
\begin{figure}[h]
\begin{tikzpicture}[scale=0.6]
  % plotting window (first quadrant)
  \def\xmax{10}
  \def\ymax{10}

  % dynamic intersection points
  \pgfmathsetmacro{\Bx}{\ymax - 3} % x where y = x+3 meets top (y=\ymax)
  \pgfmathsetmacro{\Dy}{\xmax - 3} % y where y = x-3 meets right (x=\xmax)

  % key vertices
  \coordinate (A) at (0,3);
  \coordinate (B) at (\Bx,\ymax);
  \coordinate (C) at (\xmax,\ymax);
  \coordinate (D) at (\xmax,\Dy);
  \coordinate (E) at (3,0);

  % shaded region
  \fill[gray!10] (A) -- (B) -- (C) -- (D) -- (E) -- cycle;

  % axes
  \draw[->] (-0.5,0) -- (\xmax+0.5,0) node[right] {$x$};
  \draw[->] (0,-0.5) -- (0,\ymax+0.5) node[above] {$y$};

  % boundary segments
  \draw[thin] (A) -- (B);
  \draw[thin] (E) -- (A);
  \draw[thin] (E) -- (D);

  % label next to point E
  \node[below] at (E) {$v_{14}v_{23}$};
  
  % lattice points inside the region (colored by odd/even diagonal)
  \foreach \x in {0,...,10}{
    \foreach \y in {0,...,10}{
      \pgfmathtruncatemacro{\sum}{\x+\y}
      \pgfmathtruncatemacro{\diff}{abs(\x-\y)}
      \pgfmathtruncatemacro{\c}{\y-\x}
      \ifnum\sum>2
        \ifnum\diff<4
          \ifodd\c
            \fill[blue] (\x,\y) circle (2pt);
          \else
            \fill[green!70!black] (\x,\y) circle (2pt);
          \fi
        \fi
      \fi
    }
  }
\end{tikzpicture}
\caption{Plücker set in $x=v_{24}, y=v_{34}$ coordinates}
\label{F:half-strip}
\end{figure}
Our goal is to construct $K_1,\dots, K_4\in\cP(\Z_2)$ which realize the points depicted in \rf{graph-xy}, where $(x,y)$ is a lattice point in $L_k$ or $M_k$.
    \begin{figure}[h]
    \begin{center}
    \includegraphics[scale=.48]{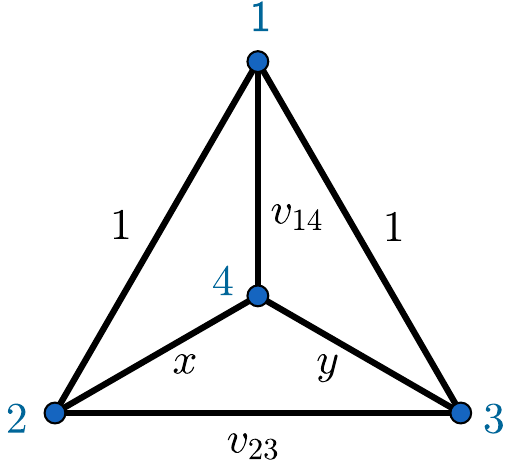} 
    \caption{A family of realizable points in $\Pl_{4,2}(\Z)$}
    \label{F:graph-xy} 
    \end{center}
    \end{figure}
Let the first three polygons be lattice segments
$$K_1=\conv\{0,e_1\},\quad K_2=\conv\{0,e_2\},\quad K_3=\conv\{0,v_{23}e_1+e_2\}.$$ 
Clearly, $\V(K_1,K_2)=\V(K_1,K_3)=1$, and $\V(K_2,K_3)=v_{23}$.
Now, to realize the lattice points in $L_k$ we choose $K_4$ to be the parallelogram
$$K_4=(x-k)K_1+\conv\{0,ke_1+v_{14}e_2\}.$$
Then $\V(K_1,K_4)=v_{14}$, and $\V(K_2,K_4)=x$. Also,
$$\V(K_3,K_4)=(x-k)\V(K_3,K_1)+
%\left|\det\left[
%\begin{matrix}
%v_{23} & k\\ 1 & v_{14}
%\end{matrix}
%\right]\right|
|\det(v_{23}e_1+e_2, ke_1+v_{14}e_2)|
=x-k+v_{14}v_{23}-k=y.$$
Next, to realize the lattice points in $M_k$ we choose $K_4$ to be the quadrilateral
$$K_4=\conv\left\{(0,0),\ (k-1,v_{14}),\ (x,v_{14}),\ (x-k, 0)\right\}.$$
Again, $\V(K_1,K_4)=\w_{e_2}(K_4)=v_{14}$, and $\V(K_2,K_4)=\w_{e_1}(K_4)=x$. 
Finally, let $u=\left(\begin{matrix}-1 \\ v_{23}\end{matrix}\right)$ be a primitive vector orthogonal to the segment $K_3$.
Then
$$\V(K_3,K_4)=\w_u(K_4)=\max_{w\in K_4}\langle w,u\rangle-\min_{w\in K_4}\langle w,u\rangle=(v_{14}v_{23}-k+1)-(k-x)=y.
$$

\end{pf}

\section{Non-realizable family in the interior of $\Pl_{4,2}(\Z)$}\label{S:interior-out}

In this final section we present an infinite family of interior points in $\Pl_{4,2}(\Z)$ which do not belong to $\SFV_{4,2}(\Z)$. 

\begin{Th}\label{T:non-realizable} Let $a,b\in\N$ with $d=\gcd(a,b)>1$. Then for any integers $c>1$ and $0<r<\min\left\{\frac{a}{d},\frac{b}{d},d\right\}$ the interior point $v$ of $\Pl_{4,2}(\Z)$ represented in \rf{graph-th}
does not belong to $\SFV_{4,2}(\Z)$.  
\end{Th}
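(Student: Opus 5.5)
The plan is to argue by contradiction and reduce everything to the Gap part of \rt{diagram} (\rp{gap}). I read \rf{graph-th} as follows: one edge of $\bK_4$ at a node, say $v_{12}$, equals $1$; the two other edges at node $1$ carry $v_{13}=a$ and $v_{14}=b$; the edge opposite to $v_{12}$ carries $v_{34}=r$; and the remaining two edges $v_{23},v_{24}$ involve the parameter $c>1$ (for instance $v_{23}=ca$, $v_{24}=cb$). If the figure distributes the weights differently, the same strategy applies after relabeling via the $\Sym_4$-action.

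Suppose $P_1,\dots,P_4\in\cP(\Z^2)$ realize $v$. Since $\V(P_1,P_2)=1$, \rp{EG-2d} gives three possibilities:
\begin{enumerate}
\item $P_1$ is a primitive segment;
\item $P_2$ is a primitive segment;
\item $P_1$ and $P_2$ are lattice translates of the same unimodular triangle.
\end{enumerate}

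First I rule out case (3). If $P_1$ and $P_2$ are translates of one triangle, then translation invariance of the mixed area gives $\V(P_1,P_3)=\V(P_2,P_3)$ and $\V(P_1,P_4)=\V(P_2,P_4)$. This would force $v_{13}=v_{23}$ and $v_{14}=v_{24}$, which the hypothesis $c>1$ forbids (e.g.\ $a\neq ca$). This is exactly the role I expect the condition $c>1$ to play.

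In case (1), let $P_1=I$ be a primitive segment and $u$ a primitive vector orthogonal to it. By part (1) of \rp{two-instances}, $\w_u(P_3)=\V(I,P_3)=a$ and $\w_u(P_4)=\V(I,P_4)=b$. Then \rp{gap}, applied to $P_3,P_4$ with $\gcd(a,b)=d$, says $\V(P_3,P_4)$ is either $0$ or at least $\min\{d,a/d,b/d\}$. But $\V(P_3,P_4)=r$ lies strictly between these values, a contradiction.

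Case (2) is handled in the same way. With $P_2$ a primitive segment, the widths of $P_3,P_4$ orthogonal to $P_2$ are $v_{23},v_{24}$, e.g.\ $ca$ and $cb$, whose gcd is $cd$. The gap bound becomes $\min\{cd,a/d,b/d\}\ge\min\{d,a/d,b/d\}>r$, again a contradiction. If the figure's $v_{23},v_{24}$ are not proportional to $a,b$, I would instead check that the quotients of these widths by their gcd still exceed $r$, or else use the symmetric roles of the nodes to reduce to case (1).

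Separately, I need to confirm that $v$ is an interior point of $\Pl_{4,2}(\Z)$ with nonnegative integer entries. This amounts to checking the three strict Plücker inequalities, e.g.\ $ab\,c$-type products against $r$, which hold comfortably since $r<\min\{a/d,b/d\}$ and $c>1$. The main obstacle is making the case analysis from \rp{EG-2d} airtight: the triangle case must be excluded by an actual inequality of weights, and the segment-$P_2$ case must still land in the gap. Both depend on the precise labels in \rf{graph-th}, so that is where I would be most careful.
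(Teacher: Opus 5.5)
Your proposal is correct and follows the paper's proof essentially verbatim. Your guessed labeling ($v_{12}=1$, $v_{13}=a$, $v_{14}=b$, $v_{23}=ac$, $v_{24}=bc$, $v_{34}=r$) is exactly the one in \rf{graph-th}, and the paper likewise invokes \rp{EG-2d}, excludes the unimodular-triangle case via $v_{13}<v_{23}$, and rules out the primitive-segment cases with the Gap part of \rt{diagram}; your explicit check that in the $P_2$ case the gap threshold is $\min\{cd,a/d,b/d\}\geq\min\{d,a/d,b/d\}>r$ usefully fills in what the paper dismisses as ``similarly.''
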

\begin{figure}[h]
    \begin{center}
    \includegraphics[scale=.48]{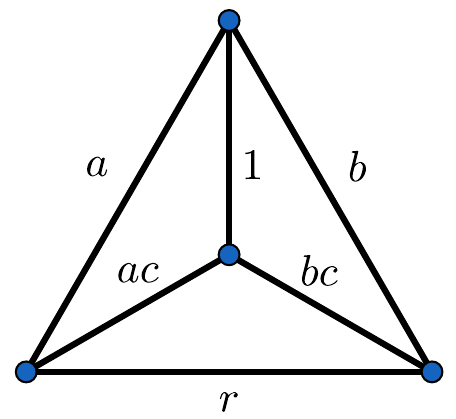} 
    \caption{A family of non-realizable interior points of $\Pl_{4,2}(\Z)$}
    \label{F:graph-th} 
    \end{center}
    \end{figure}

\begin{pf} First, it is easy to verify that $v$ is an interior point of $\Pl_{4,2}(\Z)$.
Suppose $v\in\SFV_{4,2}(\Z)$. Then there exist lattice polytopes $P_1,\dots,P_4$ such that
$v_{ij}=\V(P_i,P_j)$ and $v_{12}=1$, $v_{13}=a$, $v_{14}=b$, $v_{23}=ac$, $v_{24}=bc$, and $v_{34}=r$. 
Since $v_{12}=1$, by \rp{EG-2d}, either one of the $P_1,P_2$ is a primitive segment, or $P_1$ and $P_2$ are equal to the same unimodular triangle. The latter is impossible since $P_1=P_2$ would imply $v_{13}=v_{23}$, but $v_{13}<v_{23}$ by construction. If $P_1$ is a primitive segment, then $a=v_{13}=\w_{u}(P_3)$ and $b=v_{14}=\w_{u}(P_4)$, where $u$ is the primitive vector orthogonal to $P_1$. This contradicts part (1) of \rt{diagram}, as by construction, $r=v_{34}$ lies strictly between 0 and $\min\left\{d,\frac{\w_u(P_3)}{d},\frac{\w_u(P_4)}{d}\right\}$. Similarly, $P_2$ cannot be a primitive segment. 
\end{pf}

\begin{Rem} It follows from the above proof that the infinite family presented in \rt{non-realizable} is part of a larger subset of non-realizable interior points of $\Pl_{4,2}(\Z)$. Loosely speaking, as long as one coordinate of $v$, say $v_{12}$, equals 1 and the corresponding non-adjacent coordinate $v_{34}$ is small relative to $d=\gcd(v_{i2},v_{i3})$ and the quotients $v_{i2}/d$, $v_{i3}/d$ (for $i=1,2$)
then $v\not\in\SFV_{4,2}$. However, we do not have a complete description of the set of non-realizable interior points of $\Pl_{4,2}(\Z)$.
\end{Rem}

\bibliographystyle{alpha} % biblio style
\bibliography{lit.bib} 

\end{document}